\documentclass[11pt]{article}
\usepackage{graphicx,hyperref,color}
\usepackage{amsmath,amssymb,amsthm}
\usepackage{enumerate}
\usepackage{fullpage}

\newtheorem{theorem}{Theorem}
\newtheorem{corollary}[theorem]{Corollary}
\newtheorem{lemma}[theorem]{Lemma}
\newtheorem{claim}[theorem]{Claim}
\newtheorem{remark}[theorem]{Remark}

\newtheorem{observation}[theorem]{Observation}
\newtheorem{conjecture}[theorem]{Conjecture}

\date{}

\title{Large induced acyclic and outerplanar subgraphs of 2-outerplanar graph \thanks{This material is based upon work supported by the National Science Foundation under Grant Nos.\ CCF-1252833 and DMS-1359173.  Melissa Sherman-Bennett performed this work while participating in the Summer 2015 REU program in Mathematics and Computer Science at Oregon State University.}}
\author{Glencora Borradaile \\Oregon State University \\ \texttt{glencora@engr.oregonstate.edu}  \and Hung Le \\Oregon State University \\ \texttt{lehu@onid.oregonstate.edu}  \and Melissa Sherman-Bennett \\University of California, Berkeley \\ \texttt{m\_shermanbennett@berkeley.edu}}
\begin{document}
\maketitle
\begin{abstract}
Albertson and Berman conjectured that every planar graph has an induced forest on half of its vertices.  The best known lower bound, due to Borodin, is that every planar graph has an induced forest on two fifths of its vertices.  In a related result, Chartran and Kronk, proved that the vertices of every planar graph can be partitioned into three sets, each of which induce a forest.

We show tighter results for 2-outerplanar graphs. We show that every 2-outerplanar graph has an induced forest on at least half the vertices by showing that its vertices can be partitioned into two sets, each of which induces a forest. We also show that every 2-outerplanar graph has an induced outerplanar graph on at least two-thirds of its vertices, provided that the connected components of the inner layer are two-connected.  
\end{abstract}

\section{Introduction}
For many optimization problems, finding subgraphs with certain properties is a key to developing algorithms with efficient running times or bounded approximation ratios.  For example, balanced separator subgraphs support the design of divide-and-conquer algorithms for minor-closed graph families~\cite{LT79,LT80,GHT84,KR10} and large subgraphs of low-treewidth\footnote{Formal definitions of graph theoretic terms will be given at the end of this section.} support the design of approximation schemes, also for minor-closed graph families~\cite{Baker94,DHM07,Eppstein00}.  In the area of graph drawing, one often starts by drawing a subgraph that is somehow easier to draw than the entire graph (such as a planar graph or a tree) and then adds in remaining graph features~\cite{BatEadTam-98}; the larger the subgraph, the bigger the head-start for drawing and the more structure the subgraph has, the easier the subgraph will be to draw.  

In this paper we are concerned with finding large induced subgraphs, in particular large induced forests and large induced outerplanar subgraphs, of input planar graphs.  We are motivated both by the intrigue of various conjectures in graph theory but also by the impact that graph theoretic results have on the design of efficient and accurate algorithms.  In particular, many algorithms that are specifically designed for planar graphs rely on deep graph theoretic properties of planar graphs~\cite{Whitney32,Whitney33}.

\subsection{Large induced forests of planar graphs: known results}
 Albertson and Berman conjectured that every planar graph has an induced forest on at least half of its vertices~\cite{AB}; $K_4$ illustrates that this would be the best possible lower bound.  A proof of the Albertson-Berman Conjecture would, among other things, provide an alternative proof, avoiding the 4-Color Theorem, that every planar graph has an independent set with at least one-quarter of the vertices.

The best-known lower bound toward the Albertson-Berman Conjecture has stood for 40 years: Borodin showed that planar graphs are {\em acyclically 5-colorable} (i.e. have a 5-coloring, every two classes of which induce a forest), thus showing that every planar graph has an induced forest on at least two-fifths of its vertices~\cite{Borodin79}.  This is the best lower bound achievable toward the Albertson-Berman Conjecture via acyclic colorings as there are planar graphs which do not have an acyclic 4-coloring (for example $K_{2,2,2}$ or the octahedron). 

The Albertson-Berman Conjecture has been proven for certain subclasses of planar graphs. Shi and Xu~\cite{SX16} showed that the  Albertson-Berman Conjecture holds when $m < \lfloor 7n/4 \rfloor$ where $m$ and $n$ are the number of edges and vertices of the graphs, respectively. Hosono showed that outerplanar graphs have induced forests on at least two-thirds of the vertices~\cite{Hosono} and Salavatipour showed that every triangle-free planar graph on $n$ vertices has an induced forest with at least $\frac{17n+24}{32}$ vertices~\cite{Salavatipour}, later improved to $\frac{6n+7}{11}$ by Dross, Montassier and Pinou~\cite{DMP14} and to $\frac{5n}{9}$ by Le~\cite{Le16}. In bipartite planar graphs, the best bound on the size of the largest induced forest is $\frac{4n}{7}$ by Wang, Xie and Yu~\cite{WXY16}. 

One direction toward proving the Albertson-Berman Conjecture is to partition the vertices of graph $G$ into sets such that each set induces a forest; the minimum number, $a(G)$, of such sets is the {\em vertex arboricity} of $G$. This implies that $G$ has an induced forest with at least $1/a(G)$ of its vertices.
Chartran and Kronk first proved that all planar graphs have vertex arboricity at most 3~\cite{CK}. Raspaud and Wang proved that $a(G) \leq 2$ if $G$ is planar and either $G$ has no $4$-cycles, any two triangles of $G$ are at distance at least $3$, or $G$ has at most 20 vertices; they also illustrated a 3-outerplanar graph on 21 vertices with vertex arboricity 3~\cite{RW}. Yang and Yuan~\cite{YY07} proved that $a(G) \leq 2$ if $G$ is planar and has diameter at most $2$.

\subsection{Outline of our results}
In this paper, we show that 2-outerplanar graphs have vertex arboricity at most 2, thus showing that they satisfy the Albertson-Berman Conjecture and closing the gap for planar graphs with vertex arboricity 2 versus 3 left by Raspaud and Wang's work (Section~\ref{sec:2-2}).  We also show that every 2-outerplanar graph has an induced outerplanar graph on at least two-thirds of its vertices, assuming the connected components of the inner layer are two-connected and propose a few related conjectures (Section~\ref{sec:2-to-1}).  

\subsection{Definitions}

We use standard graph theoretic notation~\cite{Diestel05}.  In this paper, all graphs are assumed to be finite and simple (without loops or parallel edges).  $G[S]$ denotes the \emph{induced subgraph} of graph $G$ on vertex subset $S$: the graph having $S$ as its vertices and having as edges every edge in $G$ that has both endpoints in $S$. Equivalently, $G[S]$ may be constructed from $G$ by deleting every vertex and incident edges that is not in $S$. We use $d_H(v)$ to denote degree of vertex $v$ in graph $H$ and $|H|$ to denote the number of vertices of graph $H$.

\paragraph{Block-Cut Tree.} A \emph{block} of a graph $G$ is a maximal two-connected component of $G$. A \emph{block-cut tree} $\mathcal{T}$ of a connected graph $G$ is a tree where each vertex of $\mathcal{T}$ corresponds to a block and there is an edge between two vertices $X,Y$ of $\mathcal{T}$ if two blocks $X$ and $Y$ share a common vertex or are incident to a common edge. 
\paragraph{Planar graphs.} A graph $G$ is \emph{planar} if it can be drawn (embedded) in the plane without any edge crossings.  Although a planar graph may have many different embeddings, throughout this paper, we will assume that we are given a fixed embedding of the graph.  A {\em face} of a planar graph is connected region of the complement of the image of the drawing.  There is one {\em infinite} face, which we denote by $f_\infty$. We denote the boundary of $f_\infty$, which is the boundary of $G$, by $\partial G$. We say that a vertex $v$ is enclosed by a cycle $C$ if every curve from the image of $v$ to an infinite point must cross the image of $C$.

\paragraph{Planar duality.}

Every planar graph $G$ has a corresponding dual planar graph $G^*$:  the vertices of $G^*$ correspond to the faces of $G$ and the faces of $G^*$ correspond to the vertices of $G$; an edge of $G^*$ connects two vertices of $G^*$ if the corresponding faces of $G$ share an edge (in this way the edges of the two graphs are in bijection). 

\paragraph{Outerplanarity.}

A non-empty planar graph $G$ with a given embedding is \emph{outerplanar} (or $1$-\emph{outerplanar}) if all vertices are in $\partial G$. A planar graph is $k$-\emph{outerplanar} for $k > 1$ if deleting the vertices in $\partial G$ results in a $(k-1)$-outerplanar graph.  A $k$-outerplanar graph has a natural partition of the vertices into $k$ {\em layers}:  $L_1$ is the set of vertices in $\partial G$;  $L_i$ is the set of vertices in the boundary of $G \setminus \cup_{j < i} L_j$. We denote $G(V,E)$ by $G(L_1,\ldots, L_k; E)$ if $G$ is $k$-outerplanar. For a 2-outerplanar graph, we define the \emph{between degree} of a vertex $v \in L_i$ to be the number of adjacent vertices in $L_j, j \not= i$.

\paragraph{Facial Block.}  Let $\mathcal{C}$ be the set of facial cycles bounding finite faces of $G[L_1]$. For each $C \in \mathcal{C}$, let $S_C$ be the set of vertices enclosed by $C$ in $G$. Then we call the graph $G[C \cup S_C]$ a facial block of $G$. 

\section{2-outerplanar graphs have vertex-arboricity 2} \label{sec:2-2}

In this section, we prove:

\begin{theorem}\label{thm:2-arbor} 
If $G$ is a 2-outerplanar graph, then the vertex arboricity of $G$ is at most $2$: $a(G) \leq 2$.
\end{theorem}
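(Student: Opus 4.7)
My plan is strong induction on $|V(G)|$, with trivial base cases on graphs of at most three vertices. For the inductive step I first reduce to $2$-connected $G$ by processing the block-cut tree: at each cut-vertex $v$, split $G$ into two smaller $2$-outerplanar pieces sharing only $v$, apply induction to each, and glue the resulting $2$-forest partitions by renaming colors so that the two pieces agree on $v$. The union of two forests that share a single vertex is a forest, so the combined partition is valid. Iterating through $\mathcal{T}$, I may assume $G$ is $2$-connected, in which case $\partial G$ is a single cycle.

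Next, the easy subcase: if some vertex $v$ has $d_G(v) \leq 3$, apply induction to $G-v$ to obtain a $2$-forest partition $(A,B)$, then place $v$ in whichever class contains at most one of its neighbors (such a class exists by pigeonhole), which introduces no cycle. So I may also assume $\delta(G) \geq 4$, which genuinely occurs: the octahedron $K_{2,2,2}$ is a $2$-connected $2$-outerplanar graph with $\delta = 4$.

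The main obstacle is the case $\delta(G) \geq 4$. Here I would exploit that $G[L_2]$ is outerplanar, hence $2$-degenerate, so some $u \in L_2$ has at most two neighbors in $L_2$ and therefore at least two neighbors in $L_1$; symmetrically $G[L_1]$ contains a vertex with at least two neighbors in $L_2$, i.e. of between-degree at least $2$. I would look inside a single facial block for a small reducible configuration---for example, a low-between-degree $L_2$-vertex together with its $L_1$-neighbors, a chord-free arc of the facial cycle whose enclosed $L_2$-vertices form a short path, or a triangle crossing the two layers---whose removal or contraction yields a strictly smaller $2$-outerplanar graph. Induction applied to the reduction would then supply a $2$-forest partition to be extended across the reinserted configuration.

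The delicate technical step is twofold. First, I need a structural lemma guaranteeing that such a configuration exists in every $2$-connected $2$-outerplanar graph of minimum degree at least $4$; I expect this to come either from a discharging argument on the faces of $G[L_1]$, distributing the deficit from Euler's formula across the facial blocks, or from a case analysis on the possible neighborhoods of an $L_2$-vertex of minimum $L_2$-degree together with the structure of the facial block containing it. Second, I need to extend the recovered $2$-forest partition to the reinserted configuration without creating a monochromatic cycle; this is automatic for a single vertex of degree $\leq 3$, but for multi-vertex configurations two distinct trees of the same class in the reduced graph may be forced to merge, and breaking such obstructions will likely require local recoloring along an alternating path in the opposite forest (in the spirit of Kempe chains) before committing to the colors of the reinserted vertices. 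I expect this extension step, more than the structural lemma, to be the principal source of case analysis in the proof.
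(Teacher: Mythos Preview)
Your overall framework coincides with the paper's: minimal counterexample, reduce past cut vertices, dispose of vertices of degree at most $3$, then hunt for a reducible configuration when $\delta(G)\ge 4$. Where you stop is exactly where the paper's work begins, and the two ``delicate steps'' you defer are the entire substance of the argument. Two of your guesses about how that work goes are off: the paper does not discharge, and it does not recolor along Kempe-style alternating paths.

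Concretely, the paper first triangulates every bounded face (without this disk-triangulation assumption the local structure around a low-$L_2$-degree vertex is too loose to pin down), then picks a leaf facial block $B$ via the dual tree of $G[L_1]$, and inside it a leaf block $K$ of the block-cut tree of $G[L_2^B]$ avoiding the separating vertex, proving $|K|\ge 4$. The reducible configurations are \emph{critical triangles} $abc\subseteq K$ with $d_K(c)=2$ and $c$ non-separating, and \emph{pairs} of such triangles sharing an edge. The reductions are \emph{contractions} of two to four edges (not deletions), yielding a smaller $2$-outerplanar minor; reconstruction is a finite case split on which neighbours of the configuration landed in $F_0$ versus $F_1$, with no recoloring---the only trick is that when two same-class neighbours are separated in the plane by a cycle through the configuration, planarity places them in distinct trees of their forest, so they may safely be joined through the reinserted vertices. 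The final contradiction is obtained by rooting the dual tree $T_K^*$ of $K$ at the triangle containing the separating vertex and examining its deepest leaf: every possible degree of the leaf's parent forces one of the forbidden configurations at a non-separating vertex, against the reducibility claims. So your plan is not wrong, but it is only a plan.
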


We call a set of vertex-disjoint induced forests of $G$ \emph{induced p-forests} if their vertices partition the vertex set of $G$. We consider a counterexample graph $G$ of minimal order. By studying the structure of this minimal counterexample, we will derive a contradiction. Let $e$ be an edge that is not in $G$. We observe:

\begin{observation} \label{obs:disk-tri}
If $a(G\cup \{e\}) \leq 2$, then $a(G) \leq 2$.
\end{observation}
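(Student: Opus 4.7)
The plan is to observe that Observation~\ref{obs:disk-tri} is exactly the monotonicity of vertex arboricity under edge deletion: any partition of $V(G)$ whose parts induce forests in the larger graph $G \cup \{e\}$ must also induce forests in the smaller graph $G$, because every subgraph of a forest is itself a forest. So no clever combinatorial idea is required; one simply unpacks the definition of $a(\cdot)$.

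Concretely, I would begin by choosing a partition $V(G) = V_1 \sqcup V_2$ witnessing $a(G \cup \{e\}) \leq 2$, so that $(G \cup \{e\})[V_i]$ is a forest for $i = 1, 2$. Since $G$ and $G \cup \{e\}$ share the same vertex set, this is also a partition of $V(G)$. Next I would compare $G[V_i]$ with $(G \cup \{e\})[V_i]$: the two induced subgraphs have identical vertex sets and differ by at most the single edge $e$, which appears in $(G \cup \{e\})[V_i]$ exactly when both endpoints of $e$ lie in $V_i$. In either case, $G[V_i]$ is obtained from $(G \cup \{e\})[V_i]$ by deleting at most one edge, hence is a subgraph of a forest, hence is a forest. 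The same partition therefore witnesses $a(G) \leq 2$.

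The main ``obstacle'' here is really just to recognize why this observation is worth stating. Its role in the paper is to enable later steps in the proof of Theorem~\ref{thm:2-arbor} to freely add edges to the minimal counterexample $G$ (for example, to triangulate certain faces or to complete facial blocks), since adding edges can only preserve or increase vertex arboricity. The contrapositive, $a(G) \geq 3 \implies a(G \cup \{e\}) \geq 3$, is the form that will actually be invoked: any graph obtained from the minimal counterexample by edge additions remains a (not necessarily minimal) counterexample, so structural assumptions forced by edge additions may be imposed without loss of generality.
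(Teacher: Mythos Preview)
Your argument is correct and is exactly the monotonicity argument the paper intends; indeed, the paper itself offers no proof for this observation, treating it as immediate. Your additional remarks on how the contrapositive is used to justify triangulating and connecting the minimal counterexample accurately capture its role in the proof of Theorem~\ref{thm:2-arbor}.
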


Observation~\ref{obs:disk-tri} allows us to assume w.l.o.g.\ that $G$ is connected (by adding edges between components while maintaining 2-outerplanarity) and that $G$ is a disk triangulation, i.e., that every face except the outer face of $G$ is a triangle (by adding edges inside non-triangular faces while maintaining 2-outerplanarity). Let $L_1, L_2$ be the bipartition of the vertices of $G$ into layers.

\begin{observation} \label{obs:L1-2-connected}
$G[L_1]$ is two-connected.
\end{observation}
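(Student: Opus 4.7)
The plan is to argue by contradiction using the minimality of $G$. Suppose $G[L_1]$ is not 2-connected. I would first record that $G[L_1]$ is at least connected: since $G$ is connected, every vertex of $L_1$ lies on the outer face walk $\partial G$, every edge of $\partial G$ has both endpoints in $L_1$ and so belongs to $G[L_1]$, and $\partial G$ is a single closed walk through all of $L_1$. Hence the only way $G[L_1]$ can fail to be 2-connected is to have a cut vertex $v$.

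The first main step is to promote $v$ to a cut vertex of $G$ itself. If $\partial G$ visited $v$ only once, then deleting $v$ from $\partial G$ would leave a single walk connecting every remaining vertex of $L_1$ inside $G[L_1] - v$, contradicting that $v$ is a cut vertex of $G[L_1]$. Thus $\partial G$ visits $v$ at least twice, which means that in the planar embedding $v$ is a \emph{pinch point} of the outer face. A standard planarity argument then shows that $v$ splits the interior of $\partial G$ into two regions sharing only $v$, so no path in $G$ can cross from one region to the other without passing through $v$. In particular, the $L_2$ vertices, which lie strictly inside one of these regions, cannot bridge the pinch, so $v$ is a cut vertex of $G$.

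With $v$ a cut vertex of $G$, I would split $G$ at $v$ into two 2-outerplanar subgraphs $G_1, G_2$ with $V(G_1) \cap V(G_2) = \{v\}$ and $G_1 \cup G_2 = G$, each on strictly fewer vertices than $G$. By minimality each $G_i$ admits a partition into two induced forests; after possibly swapping the color classes in one of the two partitions I may assume both assign $v$ the same color, and gluing yields a 2-partition of $V(G)$. Any monochromatic cycle under this partition would lie entirely in $G_1$ or entirely in $G_2$, since a simple cycle can pass through the cut vertex $v$ at most once, and so it would be a monochromatic cycle in some $G_i$, contradicting that both classes of $G_i$ induce forests. This gives $a(G) \leq 2$, contradicting that $G$ is a counterexample. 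The step I expect to require the most care is promoting a cut vertex of $G[L_1]$ to a cut vertex of $G$: one must use planarity explicitly and rule out the possibility that $L_2$ vertices, which are absent from $G[L_1]$, secretly reconnect the two sides of the pinch. The rest is a routine minimality-and-gluing argument.
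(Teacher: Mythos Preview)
Your argument is correct and follows essentially the same route as the paper: promote a cut vertex of $G[L_1]$ to a cut vertex of $G$, split $G$ there into two smaller 2-outerplanar pieces, apply minimality to each, align the color of $v$, and glue. The paper compresses the promotion step into a single clause (``since $L_1$ is the outermost layer''), whereas you spell out the pinch-point argument via multiple occurrences of $v$ on the outer walk; this extra care is exactly what justifies that the $L_2$ vertices cannot reconnect the two sides, so your version is a fuller rendering of the same proof.
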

\begin{proof}
Suppose otherwise. Let $v$ be a cut vertex of $G[L_1]$. Then $v$ is also a cut vertex of $G$ since $L_1$ is the outermost layer. Let $B_1$, $B_2$ be two induced subgraphs of $G$ that share the cut vertex $v$ and $V(B_1) \cup V(B_2) = V$. Since $G$ is minimal, we can partition each $B_i$ into two induced forests $F_{1i}$ and $F_{2i}$, $1 \leq i \leq 2$.~W.l.o.g, we assume that $V(F_{11})\cap V(F_{12}) = \{v\}$. Then, $F_{11} \cup F_{12}$ and $F_{21} \cup F_{22}$ are two induced p-forests of $G$, contradicting that $G$ is a counter-example.      
\end{proof}

\begin{claim} \label{clm:no-deg-3}
Every vertex in $G$ has degree at least 4.
\end{claim}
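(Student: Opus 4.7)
The plan is a standard minimality argument: assume for contradiction that some vertex $v \in V(G)$ has $d_G(v) \leq 3$, produce a p-forest partition of $G - v$ by the minimality hypothesis, and then show that $v$ can always be reinserted into one side without creating a cycle. This yields an induced p-forest of $G$, contradicting that $G$ is a counterexample.

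First I would observe that $G - v$ is 2-outerplanar (the induced embedding of $G - v$ uses the same layers, with at most two of them surviving), and that $|G - v| < |G|$. Since $G$ is a minimum counterexample, there exists a partition $V(G - v) = V_1 \sqcup V_2$ such that $F_i := G[V_i]$ is a forest for $i = 1,2$. Let $a$ and $b$ be the number of neighbors of $v$ in $V_1$ and $V_2$ respectively, so that $a + b = d_G(v) \leq 3$. By pigeonhole, $\min(a,b) \leq 1$; without loss of generality $a \leq 1$.

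Now consider $G[V_1 \cup \{v\}]$. This graph is obtained from the forest $F_1$ by adding the vertex $v$ together with all $a \leq 1$ edges from $v$ to $V_1$. Since $v$ has degree at most $1$ in this induced subgraph, it is either an isolated vertex or a leaf, so no new cycle is created and $G[V_1 \cup \{v\}]$ remains a forest. Hence $(V_1 \cup \{v\}, V_2)$ is an induced p-forest partition of $G$, contradicting the assumption that $G$ is a counterexample to Theorem~\ref{thm:2-arbor}. Therefore every vertex of $G$ has degree at least $4$.

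There is essentially no hard step here; the only thing to verify carefully is that deleting a single vertex preserves 2-outerplanarity (so that the minimality hypothesis applies) and that the reinserted vertex really becomes a leaf or isolated vertex in the induced subgraph — both are immediate. The pigeonhole bound $\min(a,b) \leq 1$ is what forces the degree threshold to be $4$ rather than something larger, and it is also what makes it natural to expect subsequent claims in the paper (ruling out degree-$4$ configurations of various kinds) to require more delicate local exchange arguments.
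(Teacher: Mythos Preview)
Your argument is correct and is essentially identical to the paper's own proof: delete $v$, apply minimality to get a p-forest partition of $G-v$, note by pigeonhole that one side contains at most one neighbor of $v$, and add $v$ to that side as a leaf. The only cosmetic difference is that the paper phrases the pigeonhole step as ``one of $F_0$ or $F_1$ contains at most one of these neighbors'' rather than writing out $a+b \le 3 \Rightarrow \min(a,b)\le 1$.
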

\begin{proof}
Suppose $G$ has a vertex $v$ of degree at most 3.  Since $G$ is a minimal order counterexample and $G - v$ is a 2-outerplanar graph,  $a(G- v) \leq 2$.  Let $F_0$ and $F_1$ be two induced p-forests of $G- v$.  Since $v$ has at most 3 neighbors in $G$, one of $F_0$ or $F_1$, w.l.o.g.\ say $F_0$, contains at most one of these neighbors.  Therefore $F_0 \cup \{v\}$ is a forest of $G$ and $F_0 \cup \{v\}, F_1$ are two induced p-forests of $G$, contradicting that $G$ is a counterexample.
\end{proof}

By Observation~\ref{obs:L1-2-connected}, $\partial G[L_1]$ is a simple cycle. Thus, the graph, say $H_1^*$, of $G[L_1]$ obtained from the dual graph of $G[L_1]$ by removing the dual vertex corresponding to the infinite face of $G[L_1]$ is a tree.  Let $B$ be a facial block of $G$ that has the boundary cycle corresponding to a leaf of $H_1^*$. Then, either $\partial B$ has exactly one edge not in $\partial G$ or $B \equiv G$. In the former case, let $e_B$ be the shared edge; in the later case, let $e_B$ be any edge of $B$. Denote $L_2^B = L_2\cap V(B)$. We have: 

\begin{claim}\label{clm:l2b-non-empty}
$|L_2^B| \geq 2$.
\end{claim}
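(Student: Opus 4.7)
My plan is to locate a vertex of $C$ whose degree in $G$ would be forced to be at most $3$ if $|L_2^B|\leq 1$, contradicting Claim~\ref{clm:no-deg-3}. Call a vertex $u\in V(C)$ a \emph{$\partial G$-interior vertex of $C$} if both edges of $C$ incident to $u$ lie on $\partial G$. When $B\not\equiv G$, the only edge of $C$ that is not on $\partial G$ is the shared edge $e_B$, so only the two endpoints of $e_B$ are not $\partial G$-interior; since $|C|\geq 3$, at least $|C|-2\geq 1$ vertices of $C$ qualify. When $B\equiv G$, $C=\partial G$ and every vertex of $C$ is trivially $\partial G$-interior. I would pick any such $u$.

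The heart of the argument is to show that $d_{G[L_1]}(u)=2$. By Observation~\ref{obs:L1-2-connected}, $\partial G[L_1]=\partial G$ is a simple cycle, so the two $\partial G$-edges at $u$ are cyclically consecutive in the embedding of $G[L_1]$ around $u$. On one side of the angle they form lies the outer face of $G[L_1]$; on the other side lies the corner of the face $C$, which is framed by those same two edges (since they are also the two $C$-edges at $u$). Any further edge of $G[L_1]$ incident to $u$ would have to be drawn into one of these two regions, but both are faces of $G[L_1]$ and so admit no additional edges. Hence $u$ has exactly two $L_1$-neighbors, namely its two $\partial G$-neighbors, and $C$ is the only finite face of $G[L_1]$ incident to $u$.

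Once this is established, the bound on $|L_2^B|$ follows immediately: every neighbor of $u$ in $L_2$ must lie inside a finite face of $G[L_1]$ incident to $u$, and hence inside $S_C=L_2^B$. Therefore
\[
 d_G(u)\ \leq\ 2 + |L_2^B|.
\]
If $|L_2^B|\leq 1$, this gives $d_G(u)\leq 3$, contradicting Claim~\ref{clm:no-deg-3}, so $|L_2^B|\geq 2$.

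The main obstacle I anticipate is the planar-embedding argument in the second paragraph: ruling out any additional edge of $G[L_1]$ at $u$ requires using both the 2-connectedness of $G[L_1]$ (so that $\partial G$ is a simple cycle bordering a single outer face) and the hypothesis that $C$ is a face of $G[L_1]$ (to forbid chords on its interior side). Everything else is a routine degree count against Claim~\ref{clm:no-deg-3}.
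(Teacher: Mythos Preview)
Your proof is correct and follows essentially the same idea as the paper's: locate a vertex $u\in C$ that is not an endpoint of $e_B$, observe that it has only its two $C$-neighbours in $L_1$ and at most $|L_2^B|$ neighbours in $L_2$, and invoke Claim~\ref{clm:no-deg-3}. The paper carries this out via a case split on $|L_2^B|\in\{0,1\}$ (using the disk-triangulation to force $|C|=3$ in the first case, and the degree of the lone $L_2$-vertex to pick $u$ in the second), whereas your uniform bound $d_G(u)\le 2+|L_2^B|$ handles both cases at once.
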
  
\begin{proof}
If $|L_2^B| = 0$, then $B$ is a triangle since $G$ is a disk-triangulation and vertices have degree at least $4$. Then, the vertex of $B$ that is not an endpoint of $e_B$ has degree 2 in $G$, contradicting Claim~\ref{clm:no-deg-3}. If $L_2^B = \{v\}$,  by Claim~\ref{clm:no-deg-3}, $v$ has at least four neighbors in $L_1$ and thus, at least one neighbor $u$ of $v$ in $L_1$ is not an endpoint of $e_B$. Then the degree of $u$ in $G$ is 3, contradicting Claim~\ref{clm:no-deg-3}.
\end{proof}

\begin{claim} \label{clm:non-cut-deg}
Let $v$ be a vertex in $L_2^B$ that has between degree at least $3$. Then, either $v$ is a cut vertex of $G[L^B_2]$ or $v$ is adjacent to both endpoints of $e_B$.
\end{claim}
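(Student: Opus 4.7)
The plan is to locate the $L_1$-neighbors of $v$ on $\partial B$ and analyse the regions they cut out of $B$.  Let $u_1,\dots,u_k$ with $k\ge 3$ be these neighbors, listed in cyclic order on $\partial B$; since $v$ is enclosed by the cycle $\partial B$ in the planar embedding, every $L_1$-neighbor of $v$ lies on $\partial B$.  The edges $v u_1,\dots,v u_k$ subdivide the interior of $\partial B$ into $k$ regions $R_1,\dots,R_k$, where $R_i$ is bounded by $v u_i$, the arc $A_i$ of $\partial B$ from $u_i$ to $u_{i+1}$ (indices mod $k$), and $u_{i+1} v$.

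I will first reduce, via a planar separation argument, to the case in which $L_2^B \setminus \{v\}$ lies in a single region.  If two distinct regions $R_i$ and $R_{i'}$ both contain a vertex of $L_2^B \setminus \{v\}$ in their interiors, then any path in $G$ between such vertices that avoids $v$ must cross the boundary of some $R_\ell$, and every such crossing occurs at an $L_1$-vertex, which is not in $L_2^B$.  Hence $v$ separates these two vertices in $G[L_2^B]$ and is a cut vertex of $G[L_2^B]$; the claim holds in this case.  Otherwise $L_2^B \setminus \{v\}$ lies in a single region $R_j$, which is nonempty by Claim~\ref{clm:l2b-non-empty}.  Because $G$ is a disk triangulation, each $R_i$ with $i \neq j$, having no interior vertex, must be the triangle $v u_i u_{i+1}$; in particular $u_i u_{i+1}$ is an edge of $\partial B$ for every $i \neq j$.

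Next I case-split on where $e_B$ lies.  If $e_B = u_i u_{i+1}$ for some $i \neq j$, then $v$ is adjacent to both endpoints of $e_B$ and we are done.  Otherwise $e_B$ lies on the arc $A_j$.  Because $R_j$ has a nonempty interior, $A_j$ contains at least one intermediate vertex $w \in L_1$, and at least one endpoint of $e_B$ is such a $w$, which is not a neighbor of $v$.

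The main obstacle is to rule out this last configuration.  My plan is to derive a contradiction with the minimality of $G$.  Because $v$ has no $L_1$-neighbor on $A_j$ other than $u_j$ and $u_{j+1}$, the triangles of $G$ incident to $v$ inside $R_j$ form a fan $v u_j y_1,\; v y_1 y_2,\;\dots,\; v y_s u_{j+1}$ for a path $y_1 \cdots y_s$ of $L_2^B$-vertices in the interior of $R_j$.  Applying the minimality of $G$ to $G - v$ yields induced p-forests $F_0, F_1$ of $G - v$, and the plan is to exploit this fan structure together with Claim~\ref{clm:no-deg-3} applied at the intermediate $w$-vertices of $A_j$ to argue that the partition of $V\setminus\{v\}$ may be adjusted so that $v$'s neighbors in some $F_i$ lie in distinct trees, permitting $v$ to be added back to that forest without creating a cycle.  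This produces induced p-forests of $G$, contradicting that $G$ is a counterexample.
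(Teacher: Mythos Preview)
Your regional decomposition and the reduction to a single region $R_j$ match the paper's argument (the paper works with only three of the $L_1$-neighbours, $v_1,v_2,v_3$, rather than all $k$, but the geometry is the same), and the case $e_B=u_iu_{i+1}$ with $i\ne j$ is fine.

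The gap is in the remaining case. The minimality argument you sketch is not a proof: $v$ may have arbitrarily high degree, and neither the fan structure inside $R_j$ nor Claim~\ref{clm:no-deg-3} at interior vertices of $A_j$ gives any control over how the neighbours of $v$ are distributed between $F_0$ and $F_1$, or over whether those in one class lie in a common tree. The phrase ``adjust the partition'' is doing all the work and none of it is justified; the paper never attempts an argument of this shape for a vertex of unbounded degree, and there is no reason to expect one to succeed here.

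In fact your own setup finishes the proof immediately, without minimality. Since $k\ge 3$ there is some $m\notin\{j,j+1\}$. Both regions $R_{m-1}$ and $R_m$ are then triangular faces, so the only neighbours of $u_m$ inside $B$ are $u_{m-1}$, $u_{m+1}$ and $v$; and since $e_B$ lies on $A_j$, the vertex $u_m$ is not an endpoint of $e_B$ and hence has no neighbour outside $B$. Thus $d_G(u_m)=3$, contradicting Claim~\ref{clm:no-deg-3}. This is exactly the paper's argument: its vertex $v_2$ is your $u_m$. You were looking for the low-degree vertex on $A_j$, where none need exist (interior vertices of $A_j$ can be adjacent to many $L_2^B$-vertices inside $R_j$), instead of among the $u_i$ themselves, where one is guaranteed.
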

\begin{proof}
Let $v_1,v_2,v_3$ be neighbors of $v$  in $\partial B$ in clockwise  order around $v$. Let $\partial B[v_i,v_j]$ be the clockwise segment of $\partial B$ from $v_i$ to $v_j$, $i\not= j$. We define $C_{ij} =  \partial B[v_i,v_j] \cup \{vv_i,vv_j\}$, which is a cycle of $B$. Assume $v$ is not a cut vertex, at most one cycle of $\{C_{12},C_{23}, C_{31}\}$ encloses a vertex of $L_2^B$, say $C_{31}$. Thus, $v_2$ is only adjacent to $v$ and two other neighbors, say $v_1',v_3'$, of $\partial B$. Since $C_{12}$ and $C_{23}$ enclose no vertex of $L_2^B$, $vv'_1v_2$ and $vv_2v_3'$ are faces of $G$. If neither $v'_1v_2 = e_B$ nor $v_2v'_3 = e_B$, then $d_G(v_2) = 3$,  contradicting Claim~\ref{clm:no-deg-3}.
\end{proof}
Suppose $v \in L^B_2$ is such that $d_{G[L_2^B]}(v) = 1$. By Claim~\ref{clm:no-deg-3}, $d_G(v) \geq 4$ so  $v$ has between degree at least $3$. Thus, by Claim~\ref{clm:non-cut-deg}, we have:
\begin{observation}\label{obs:in-degree-one}

If there exists $v \in L_2^B$ such that $d_{G[L^B_2]}(v) = 1$, then $v$ must be adjacent to both endpoints of $e_B$. 
\end{observation}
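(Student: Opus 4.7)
The plan is to derive this directly from the two preceding claims, using the observation that a vertex of degree $1$ can never be a cut vertex. The preamble before the statement has already laid out the first step: given $v \in L_2^B$ with $d_{G[L_2^B]}(v) = 1$, Claim~\ref{clm:no-deg-3} forces $d_G(v) \geq 4$, and since exactly one neighbor of $v$ lies in $L_2^B$, the between degree of $v$ is at least $3$. So the hypothesis of Claim~\ref{clm:non-cut-deg} is satisfied.

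Applying Claim~\ref{clm:non-cut-deg}, we get the dichotomy: either $v$ is a cut vertex of $G[L_2^B]$, or $v$ is adjacent to both endpoints of $e_B$. The plan is to rule out the first alternative. This is where a small but easy observation enters: a vertex $v$ with $d_{G[L_2^B]}(v) = 1$ has a unique neighbor $u$ in $G[L_2^B]$, and deleting $v$ from $G[L_2^B]$ removes only the edge $uv$; every other vertex of its connected component remains connected to $u$ via paths that did not use $v$ (since $v$ was a leaf of that component). Hence the number of connected components of $G[L_2^B]$ does not increase, so $v$ is not a cut vertex.

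The only remaining option from Claim~\ref{clm:non-cut-deg} is that $v$ is adjacent to both endpoints of $e_B$, which is exactly what we wanted to conclude.

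No step here looks like a real obstacle; the whole observation is essentially a corollary of Claim~\ref{clm:non-cut-deg} together with the elementary fact that leaves are never cut vertices. The only mild subtlety is being careful that ``cut vertex of $G[L_2^B]$'' is interpreted in the sense of increasing the number of components (so that the leaf argument applies even when $G[L_2^B]$ is disconnected), which is the standard convention and is plainly the sense used in Claim~\ref{clm:non-cut-deg}.
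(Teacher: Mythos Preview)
Your proof is correct and follows exactly the paper's approach: the preamble before the observation already records that $d_{G[L_2^B]}(v)=1$ forces between degree at least $3$ via Claim~\ref{clm:no-deg-3}, and then Claim~\ref{clm:non-cut-deg} is invoked, with the first alternative ruled out because a leaf cannot be a cut vertex. You have simply made the last step more explicit than the paper does.
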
 

Let $x_B,y_B$ be the endpoints of $e_B$.  Since $G$ is a triangulation, there is a vertex $v \in L_2^B$ such that $vx_By_B$ is a face of $G$. We call $v$ the \emph{separating vertex} of $B$.

\begin{claim} \label{clm:e-B-cut}
If $v' \not= v$ is a vertex in $L_2^B$ that is adjacent to both endpoints of $e_B$, then, $v'$ is a cut vertex of $L_2^B$.
\end{claim}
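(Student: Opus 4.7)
The plan is to use the triangle $T':=v'x_By_B$ as a separator in the planar embedding of $G$. First I would show $T'$ strictly encloses $v$: since $vx_By_B$ is the unique face of $G$ incident to $e_B$ on the inside-$B$ side (by definition of the separating vertex) and $v'\ne v$, the cycle $T'$ is not a face of $G$, and any curve from $v$ to the unbounded face must cross one of the three edges $v'x_B$, $v'y_B$, $x_By_B$ of $T'$; so $v$ lies strictly inside $T'$.

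Let $I:=\{u\in L_2^B:u\text{ is enclosed by }T'\}$ and $O:=L_2^B\setminus(I\cup\{v'\})$, so that $v\in I$. Because $x_B,y_B\in L_1$ we have $T'\cap L_2^B=\{v'\}$, hence by planarity no edge of $G$ joins a vertex of $I$ to a vertex of $O$, and any path in $G[L_2^B]$ from $I$ to $O$ must pass through $v'$. The disk-triangulation property supplies an $I$-neighbor of $v'$ for free: the face of $G$ on the inside-$T'$ side of $v'x_B$ is a triangle $v'x_Bz$, and since the interior of $T'$ contains no $L_1$-vertex, $z\in I$. It therefore suffices to exhibit an $O$-neighbor of $v'$ in $G[L_2^B]$; removing $v'$ then disconnects $z$ from this $O$-neighbor, so $v'$ is a cut vertex.

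I would produce this $O$-neighbor by examining the face $v'x_Bw$ of $G$ on the outside-$T'$ side of $v'x_B$. Here $w\ne y_B$, for otherwise $T'$ itself would be a face of $G$, which we have already ruled out. If $w\in L_2$, then $w\in O$ and we are done. Otherwise $w\in L_1\setminus\{x_B,y_B\}$, in which case $v'$ has between degree at least $3$. For this remaining case I would rerun the argument in the proof of Claim~\ref{clm:non-cut-deg} on $v'$ with the three $L_1$-neighbors $x_B,y_B,w$, choosing the cyclic order so that $C_{x_By_B}=T'$. Since $T'$ encloses $v\in L_2^B$, if $v'$ were not a cut vertex then the other two cycles $C_{y_Bw}$ and $C_{wx_B}$ would enclose no $L_2^B$-vertex; but then the same face-structure analysis used in Claim~\ref{clm:non-cut-deg} would force the middle vertex $w$ to have only $v'$ and its two $\partial B$-neighbors as neighbors in $G$, giving $d_G(w)=3$ and contradicting Claim~\ref{clm:no-deg-3}, since $w$ is not an endpoint of $e_B$.

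The main obstacle I expect is this final step: re-using the proof of Claim~\ref{clm:non-cut-deg} in the right cyclic order so that $T'$ plays the role of the single enclosing cycle and the degree-$3$ contradiction lands on $w$ rather than on $x_B$ or $y_B$, where the $e_B$ escape clause in Claim~\ref{clm:non-cut-deg} would block the contradiction.
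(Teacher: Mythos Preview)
Your plan is correct. The inside-neighbour step (finding $z\in I$ via the face on the $T'$-side of $v'x_B$) is essentially the same as the paper's: the paper phrases it via the $4$-cycle $C_{vv'}=\{v,x_B,v',y_B\}$, noting that $e_B$ lies outside $C_{vv'}$ so the disk-triangulation forces a chord or internal path from $v$ to $v'$, but the content is identical.

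For the outside neighbour you take a genuinely different route. The paper argues globally: it looks at the entire region $C_{v'}=(\partial B\setminus e_B)\cup\{v'x_B,v'y_B\}$ and observes that if this region enclosed no $L_2^B$-vertex then \emph{every} vertex of $\partial B\setminus\{x_B,y_B\}$ would have only $v'$ and its two $\partial B$-neighbours as neighbours, giving degree~$3$ and contradicting Claim~\ref{clm:no-deg-3}. Thus some $L_2^B$-vertex lies in $C_{v'}$, and connectivity of $G[L_2^B]$ then yields the $O$-neighbour of $v'$. Your approach is local: you inspect a single face $v'x_Bw$, and only when $w\in L_1$ do you fall back to the Claim~\ref{clm:non-cut-deg} machinery. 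This works, and the cyclic-order obstacle you flag is genuinely resolvable: since $x_B,y_B$ are adjacent on $\partial B$ and the cyclic order of $v'$'s $L_1$-neighbours around $v'$ agrees with their order on $\partial B$, among $\{x_B,y_B,w\}$ the vertex $w$ can always be placed as the middle vertex $v_2$, making $C_{31}=T'$ the unique enclosing cycle and forcing $d_G(w)=3$. The paper's region-at-once argument buys you freedom from this orientation bookkeeping; your face-by-face argument buys a more constructive identification of the relevant degree-$3$ vertex but at the cost of the case split on $w$ and the cyclic-order verification.
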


\begin{proof}
We will prove that $v'$ has at least one neighbor in $L_2^B$ inside the triangle $v'x_By_B$  and at least one neighbor in $L_2^B$ outside the triangle $v'x_By_B$; thus $v'$ is a cut vertex of $L_2^B$. 

By planarity, the triangle $v'x_By_B$ encloses $v$. Let $C_{vv'} = \{v,x_B,v',y_B\}$ which is a cycle of $G$.  Since $G$ is a disk triangulation and the edge $x_B,y_B$ is embedded outside $C_{vv'}$, there must be an edge or a path inside $C_{vv'}$ connecting $v$ and $v'$. Thus, $v'$ has at least one neighbor in $L^B_2$ inside the triangle $v'x_By_B$.

Suppose that the cycle $C_{v'} = \{\partial B \setminus e_B\} \cup \{v'x_B,v'y_B\}$ does not enclose any vertex of $L^B_2$. Since $B$ is a facial block that only has $e_B$ as a possible edge not in $\partial G$, every vertex in $C_{v'} \setminus \{v'\}$ must have $v'$ as a neighbor and has degree 3, contradicting Claim~\ref{clm:no-deg-3}. Thus, $C_{v'}$ must enclose at least one vertex of $L^B_2$. That implies $v'$ has at least one neighbor in $L^2_B$ outside the triangle $v'x_By_B$ as desired. 
\end{proof}

Since every cut vertex of $L_2^B$ has degree at least $2$ in $G[L_2^B]$, by Claim~\ref{clm:e-B-cut} and Observation~\ref{obs:in-degree-one}, we have:

\begin{observation} \label{obs:deg-one-unique}
Only the separating vertex $v$ of $B$ can have $d_{G[L_2]}(v) = 1$.
\end{observation}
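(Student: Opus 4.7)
The plan is to chain together the three immediately preceding results. Suppose for contradiction that some $v' \in L_2^B$ with $v' \neq v$ (where $v$ is the separating vertex of $B$) satisfies $d_{G[L_2^B]}(v') = 1$. The goal is to derive a contradiction by showing that $v'$ would simultaneously have to be a cut vertex of $G[L_2^B]$ and have $G[L_2^B]$-degree exactly $1$, which is impossible.

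First, I would apply Observation~\ref{obs:in-degree-one} directly: since $v'$ lies in $L_2^B$ with $d_{G[L_2^B]}(v') = 1$, it must be adjacent to both endpoints $x_B$ and $y_B$ of $e_B$. Next, since $v' \neq v$, the hypothesis of Claim~\ref{clm:e-B-cut} is satisfied, so $v'$ is a cut vertex of $G[L_2^B]$. Finally, I would invoke the elementary fact that any cut vertex of a graph has degree at least $2$ in that graph (removing it must separate at least two components, each of which contributes at least one neighbor). This contradicts $d_{G[L_2^B]}(v') = 1$, completing the proof.

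There is essentially no obstacle here; this observation is a short bookkeeping corollary that simply records the combined consequence of Observation~\ref{obs:in-degree-one} and Claim~\ref{clm:e-B-cut} for later use in the argument. The only thing worth being careful about is the mild notational tension between $d_{G[L_2]}(v)$ as written in the statement and $d_{G[L_2^B]}(v)$ used in the surrounding text — I would present the proof in terms of $d_{G[L_2^B]}$, since that is the quantity actually controlled by the cited results, and read the statement accordingly.
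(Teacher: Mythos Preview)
Your proposal is correct and matches the paper's own one-line justification exactly: the paper simply notes that cut vertices of $G[L_2^B]$ have degree at least $2$ and then invokes Observation~\ref{obs:in-degree-one} together with Claim~\ref{clm:e-B-cut}. Your remark about the $d_{G[L_2]}$ versus $d_{G[L_2^B]}$ notational slip is also apt.
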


If the block-cut tree of $G[L_2^B]$ has at least two vertices, let $K$ be a leaf block of $G[L_2^B]$ that does not contain the separating vertex of $B$. In this case, by Observation~\ref{obs:deg-one-unique}, $|K| \geq 3$.  Otherwise, let $K  = G[L_2^B]$. We refer to the cut vertex of $K$ in the former case and the separating vertex of $B$ in the latter case as the \emph{separating vertex} of $K$. By Claim~\ref{clm:non-cut-deg}, we have:

\begin{observation} \label{obs:K-non-cut-deg}
Non-separating vertices of $K$ have between degree at most $2$. 
\end{observation}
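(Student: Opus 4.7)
The plan is to prove Observation~\ref{obs:K-non-cut-deg} by contradiction via Claim~\ref{clm:non-cut-deg}. Suppose $v$ is a non-separating vertex of $K$ with between degree at least $3$. Claim~\ref{clm:non-cut-deg} then forces $v$ to fall into one of two categories: either $v$ is a cut vertex of $G[L_2^B]$, or $v$ is adjacent to both endpoints of $e_B$. The strategy is to rule out the first category using the definition of $K$, and then to use Claim~\ref{clm:e-B-cut} to show that the second category also produces a cut vertex of $G[L_2^B]$, giving a contradiction in all situations.

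For ruling out the cut-vertex option I would split on the two possibilities for $K$. If the block-cut tree of $G[L_2^B]$ has at least two vertices, then $K$ is a leaf block whose unique attachment point to the rest of $G[L_2^B]$ is precisely the separating vertex of $K$; any other vertex of $K$, in particular a non-separating $v$, lies in no other block of $G[L_2^B]$ and hence is not a cut vertex of $G[L_2^B]$. If instead $K = G[L_2^B]$, then by the way this case is entered the block-cut tree is a single vertex, so $G[L_2^B]$ is itself a block and has no cut vertices at all. Either way $v$ cannot be a cut vertex of $G[L_2^B]$, and so from Claim~\ref{clm:non-cut-deg} we must have $v$ adjacent to both $x_B$ and $y_B$.

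To close the argument I would then invoke Claim~\ref{clm:e-B-cut}: any vertex of $L_2^B$ that is adjacent to both endpoints of $e_B$ and is distinct from the separating vertex of $B$ is a cut vertex of $G[L_2^B]$. In the first case for $K$, the separating vertex of $B$ lies in the (unique) block of $G[L_2^B]$ containing it, and by the choice of $K$ that block is different from $K$; hence $v \in V(K)$ cannot equal the separating vertex of $B$. In the second case, the separating vertex of $K$ is by definition the separating vertex of $B$, so a non-separating $v$ again differs from it. In both cases Claim~\ref{clm:e-B-cut} applies and makes $v$ a cut vertex of $G[L_2^B]$, directly contradicting what we just established.

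The main obstacle I expect is keeping the two meanings of ``separating vertex of $K$'' consistent with each other while invoking Claim~\ref{clm:e-B-cut}, since the notion of ``cut vertex of $G[L_2^B]$'' and ``cut vertex of $K$'' do not always coincide and the definition of $K$ toggles between leaf-block and whole-graph cases; the write-up must make clear that in both cases a non-separating vertex of $K$ is simultaneously (i) not a cut vertex of $G[L_2^B]$ and (ii) not the separating vertex of $B$, which are the two conditions needed to derive a contradiction from Claims~\ref{clm:non-cut-deg} and~\ref{clm:e-B-cut}.
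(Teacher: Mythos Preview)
Your proposal is correct and follows the paper's intended route: the paper justifies the observation simply ``By Claim~\ref{clm:non-cut-deg}'', and you have spelled out the details, correctly supplying Claim~\ref{clm:e-B-cut} to close off the second alternative. One minor wording issue: the separating vertex of $B$ need not lie in a \emph{unique} block of $G[L_2^B]$ (it could itself be a cut vertex), but this is immaterial since by the explicit choice of $K$ in the leaf-block case the separating vertex of $B$ is not in $K$ at all, which is all you actually use.
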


We call a triangle $abc$ of $K$ a \emph{critical triangle} with top $c$ if $d_K(c) = 2$ and $c$ is non-separating. By Observation~\ref{obs:K-non-cut-deg} and Claim~\ref{clm:no-deg-3}, $c$ has exactly two neighbors in $L_1$, that we denote by $d,e$ (see Figure~\ref{fig:critical-triangle}). Since $G$ is a disk triangulation, two edges $da$ and $eb$ are edges of $G$. 
\begin{figure}[tbh]
  \centering
    \includegraphics[height=1.3in]{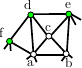}
      \caption{The critical triangle $abc$ and two neighbors $d,e$ of $c$ in $L_1$. Hollow vertices are in $L_2$. }
       \label{fig:critical-triangle}
\end{figure}

\begin{claim} \label{clm:at-least-deg-5}
Vertices $d$ and $e$ have degree at least 5.
\end{claim}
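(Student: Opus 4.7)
Suppose for contradiction that $d_G(d) = 4$; the case for $e$ is symmetric, since $d$ and $e$ play interchangeable roles with respect to $c$. Let $f$ denote the fourth neighbor of $d$ besides $a, c, e$.

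I first pin down $f$ in the embedding. Because $G$ is a disk-triangulation and $d \in L_1 = \partial G$, exactly one face incident to $d$ is $f_\infty$ and the others are triangles. The triangular faces $dac$ and $dce$ force the cyclic order of $d$'s neighbors to be $f, a, c, e$, so the four faces at $d$ are $dfa, dac, dce, def$. The face $dfa$ cannot equal $f_\infty$, as that would place $a \in \partial G = L_1$, contradicting $a \in L_2$. Hence $def = f_\infty$, which implies $f \in L_1$, both $df$ and $de$ lie on $\partial G$, and the face $dfa$ is a triangular inner face, so $af \in E(G)$.

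Next, I reduce to $G' = G \setminus \{c, d\}$, which is 2-outerplanar and strictly smaller than $G$; by minimality, $G'$ admits induced $p$-forests $F_0'$ and $F_1'$. The aim is to show that $F_0', F_1'$ always extend to induced $p$-forests of $G$ by placing $c$ and $d$ into appropriate forests, contradicting that $G$ is a counterexample. Since $d$'s three neighbors in $G'$ are $\{a, e, f\}$, pigeonhole lets one insert $d$ into a forest containing at most one of them without creating a cycle. Then $c$, with $G$-neighbors $\{a, b, d, e\}$, is placed via a case analysis on the distribution of $\{a, b, e, f\}$ between the two forests, using the edges $ab$, $be$, and $af$ (the first two from the critical triangle structure around $c$, the third from the structural step) to determine which components would be joined by adding $c$.

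The main obstacle is a single stuck configuration: $a \in F_0'$ and $\{b, e, f\} \subseteq F_1'$. In this case, after inserting $d$ into $F_0'$ the pair $\{a, d\}$ shares a component of the resulting forest via the edge $ad$, so $c$ cannot join that side; meanwhile $\{b, e\}$ shares a component of $F_1'$ via $be$, so $c$ cannot join the other side either. The resolution is to swap $f$ from $F_1'$ to $F_0'$---the edge $af$ ensures $f$ attaches cleanly to $a$'s component---which reduces the situation to an already-handled distribution. Verifying that this swap preserves the forest property (in particular, that $f$'s other neighbors in $F_0'$ are in distinct components) is the most delicate step and relies on the local structure around $d$, $a$, and $f$ forced by the structural step.
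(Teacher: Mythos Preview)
Your structural opening is correct and matches the paper: the cyclic order at $d$ forces $f\in L_1$ and $af\in E(G)$. The gap is in your reduction. By taking $G'=G\setminus\{c,d\}$ you lose exactly the information needed to handle your ``stuck'' configuration $a\in F_0'$, $\{b,e,f\}\subseteq F_1'$.

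First, this configuration is genuinely stuck for the deletion approach whenever $e$ and $f$ lie in the same component of $F_1'$: placing $c,d$ both in $F_0$ creates the triangle $acd$; placing $c,d$ both in $F_1$ creates the triangle $cde$; $c\in F_1,\ d\in F_0$ creates the triangle $bce$; and $c\in F_0,\ d\in F_1$ closes a cycle through $d$ and the $F_1'$-path from $e$ to $f$. Second, your proposed fix---moving $f$ into $F_0'$---is not justified. For $F_0'\cup\{f\}$ to be acyclic you need every pair of $F_0'$-neighbors of $f$ to lie in distinct components of $F_0'$. But $f$ has degree at least $4$ in $G$, hence at least two neighbors in $G'$ besides $a$ (its other $\partial G$-neighbor, and possibly further $L_2$-neighbors), and nothing in the local picture at $d,a,f$ constrains where those vertices sit in $F_0'$ or whether they share a component with $a$. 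The phrase ``relies on the local structure around $d$, $a$, and $f$'' is doing work you have not supplied, and I do not see a way to supply it: the $p$-forests of $G\setminus\{c,d\}$ simply carry no information about $f$'s full neighborhood.

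The paper sidesteps this by \emph{contracting} $fd$ and $dc$ instead of deleting $c,d$. In the contracted graph the surviving vertex (still called $f$) becomes adjacent to $a$, $b$, and $e$, so any induced $p$-forests of $G'$ already respect those adjacencies. In particular, if $f\in F_0$ and $b\in F_0$ then $a,e$ are forced into $F_1$ (otherwise $F_0$ contains the triangle $fab$ or $fbe$), which is precisely what rules out the analogue of your stuck case. Contraction buys control over $\{a,b,e\}$ that deletion throws away; if you want to salvage the deletion route you would need an independent argument bounding $f$'s interaction with $F_0'$, and none is visible from the local data you have established.
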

\begin{proof}
Neither $d$ nor $e$ has degree less than 4 by Claim~\ref{clm:no-deg-3}. For contradiction, w.l.o.g, we assume that $d_G(d) = 4$.  Recall $a,c, e$ are three neighbors of $d$. Let $f$ be the only other neighbor of $d$. Since $a,c \in L_2$ and $\partial G$ is a simple cycle (Observation~\ref{obs:L1-2-connected}), $f$ must be in $L_1$ (see Figure~\ref{fig:critical-triangle}). Since $G$ is a disk triangulation, $af \in E(G)$. Let $G'$ be the graph obtained from $G$ by contracting $fd$ and $dc$ and removing parallel edges. Then $G'$ is a minor of $G$ (and so is 2-outerplanar) with fewer vertices. Let $F_0,F_1$ be two induced p-forests of $G'$ that exist by the minimality of $G$. Without loss of generality, we assume that $f \in F_0$. We have two cases:
\begin{enumerate}
\item If $b \in F_0$, then $a,e \in F_1$. If $bf \not\in G$, adding $c,d$ to $F_0$ does not destroy the acyclicity of $F_0$ in $G$. Thus, $F_0 \cup \{c,d\}, F_1$ are two induced $p$-forests of $G$. If $bf \in G$, the cycle $\{b,f,d,c\}$ separates $a$ from $e$ so $a$ and $e$ are in different trees in $F_1$. Thus, $F_0 \cup \{c\}, F_1 \cup \{d\}$ are two induced p-forests of $G$.   
\item Otherwise, $b \in F_1$. We have three subcases:
	\begin{enumerate}
		\item If $a,e$ are both in $F_0$, then $F_0,F_1 \cup  \{c,d\}$ are two induced p-forests of $G$.
		\item If $a,e \in F_1$, then, $F_0 \cup \{c,d\}, F_1$ are two induced p-forests of $G$.
		\item  If $a,e$ are in different induced p-forests of $G'$, then,  $F_0 \cup \{c\}, F_1 \cup \{d\}$ are two induced p-forests of $G$.
	\end{enumerate}	 
\end{enumerate}
In each case, the resulting p-forests contradict that $G$ is a minimal order counter example. 
\end{proof}

\begin{claim} \label{clm:K-non-triangular}
$|K| \geq 4$.
\end{claim}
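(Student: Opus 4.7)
The plan is to rule out $|K| \in \{1, 2, 3\}$ in turn, exploiting the preceding structural claims and the minimality of $G$. Since the definition of $K$ forces $|K| \geq 3$ in the leaf-block sub-case via Observation~\ref{obs:deg-one-unique}, the only small cases to treat arise when $K = G[L_2^B]$.

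For $|K| \leq 2$, Claim~\ref{clm:l2b-non-empty} forces $|L_2^B| = 2$, say $L_2^B = \{u, v\}$. If $uv \in E$, then both $u$ and $v$ satisfy $d_{G[L_2^B]}(\cdot) = 1$, contradicting Observation~\ref{obs:deg-one-unique}. Otherwise, each of $u, v$ has between degree at least $4$, so by Claim~\ref{clm:non-cut-deg} both are adjacent to $x_B$ and $y_B$; this produces a $4$-cycle $u x_B v y_B$ inside $B$ whose interior contains no vertex of $G$, contradicting the disk-triangulation assumption.

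For $|K| = 3$, the block $K$ must be a triangle $abs$ where, w.l.o.g., $s$ is the separating vertex. By Observation~\ref{obs:K-non-cut-deg} and Claim~\ref{clm:no-deg-3}, each of $a$ and $b$ has between degree exactly $2$. The triangle $abs$ encloses no other vertex of $G$ (the $L_1$-vertices of $B$ all lie on $\partial B$; $a$ and $b$ have no $L_2$-neighbors outside $\{a, b, s\}$; and in the leaf-block setting any extra $L_2^B$-neighbor of $s$ attached through the block-cut structure sits outside the triangle), so $abs$ is itself a face of $G$. Applying the critical-triangle setup with top $a$ and then with top $b$ produces three distinguished $L_1$-vertices $d, e, f$: $d$ is the common $L_1$-neighbor of $a$ and $b$ across the edge $ab$, while $e$ and $f$ are the common $L_1$-neighbors of $\{a, s\}$ and of $\{b, s\}$ respectively. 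By Claim~\ref{clm:at-least-deg-5}, each of $d, e, f$ has degree at least $5$ in $G$.

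I would then derive a contradiction by a contraction argument in the style of the proof of Claim~\ref{clm:at-least-deg-5}. Contract the edges $ab$ and $as$ so that the triangle $abs$ collapses to a single new vertex $w$, yielding a 2-outerplanar minor $G'$ with at least two fewer vertices; the neighborhood of $w$ in $G'$ consists of $\{d, e, f\}$ together with the remaining neighbors of $s$ (the $L_1$-vertices $x_B, y_B$ when $K = G[L_2^B]$, and possibly one extra $L_2^B$-neighbor in the leaf-block setting). By the minimality of $G$, $G'$ admits induced p-forests $F_0, F_1$. Un-contract $w$ back into $\{a, b, s\}$ and perform a case analysis on which of $F_0, F_1$ contains $w$ and on the p-forest memberships of $d, e, f$: the flexibility of distributing three vertices of a triangle between two forests, together with the fact that any two of the edges $ab, as, bs$ placed into the same forest would create the $3$-cycle $abs$, should allow us to assign $a, b, s$ to $F_0$ and $F_1$ so as to obtain an induced p-forest of $G$, contradicting that $G$ is a minimal counterexample. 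The main obstacle will be keeping this case analysis tractable; in the leaf-block case it may be cleaner to contract only $ab$ and leave $s$ intact, so that $s$'s extra $L_2^B$-neighbor is handled by the existing partition without additional bookkeeping.
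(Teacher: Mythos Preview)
Your handling of $|K|\le 2$ is fine (if a bit more elaborate than necessary; the paper simply notes that $|K|=2$ would give two degree-$1$ vertices in $G[L_2^B]$, contradicting Observation~\ref{obs:deg-one-unique}). The real issue is the $|K|=3$ case, where you set up exactly the right picture and then walk past the one-line finish.

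You correctly identify the common $L_1$-neighbor $d$ of $a$ and $b$ and invoke Claim~\ref{clm:at-least-deg-5} to conclude $d_G(d)\ge 5$. But now simply \emph{count} the neighbors of $d$. Since $a$ and $b$ each have between degree exactly $2$, the faces around $d$ inside $B$ are precisely the triangles $dea$, $dab$, $dbf$ (using your labels $e,f$ for the other $L_1$-neighbors of $a,b$); hence $d$'s neighbors on $\partial B$ are exactly $e$ and $f$, and its only $L_2$-neighbors are $a$ and $b$ (an edge $ds$ would cross $ab$, and for the same planarity reason $d\notin\{x_B,y_B\}$ since those are adjacent to the separating vertex $s$). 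Thus $d_G(d)=4$, contradicting the $\ge 5$ you already derived. This is exactly the paper's argument, and it ends the proof immediately.

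Your proposed contraction-and-case-analysis route is therefore unnecessary. It is also not a proof as written: you never verify that the un-contraction can always be completed, and the phrase ``should allow us to assign $a,b,s$'' hides a genuine case split over the forest memberships of $d,e,f$ \emph{and} the extra neighbors of $s$ (which include $x_B,y_B$ or a further $L_2^B$-vertex). That analysis is doable but tedious, and there is no reason to undertake it when the degree count already yields the contradiction.
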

\begin{proof}
If $K \not= G[L_2^B]$, as noted in the definition of $K$, $|K| \geq 3$. If $K = G[L_2^B]$, then by Claim~\ref{clm:l2b-non-empty}, $|K| \geq 2$ and by Observation~\ref{obs:deg-one-unique}, $|K| \geq 3$. Suppose that $|K| = 3$. Then, $K$ is a triangle. Let $u,w$ be two neighbors of the separating vertex $v$ in $K$. Then, $wuv$ is a critical triangle with top $u$ (or $w$). By Claim~\ref{clm:no-deg-3} and Observation~\ref{obs:K-non-cut-deg}, $u$ and $w$ both have between degree 2. Thus, $u$ and $w$ have a common neighbor on $L_1$ which therefore has degree $4$, contradicting Claim~\ref{clm:at-least-deg-5}. 
\end{proof}

Suppose that $a$ and $b$ of a critical triangle $abc$ with top $c$ of $K$ have a common neighbor $f$ in $L_2$. We have: 

\begin{claim} \label{clm:b-cut-vertex}
If $fa$ (resp. $fb$) is in $\partial G[L_2^B]$, then $a$ (resp. $b$) must be the separating vertex. 
\end{claim}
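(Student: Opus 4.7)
The plan is to argue by contradiction: suppose $a$ is not the separating vertex of $K$ and derive a structural impossibility. The case of $fb$ is symmetric, so I focus on $fa$.

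First, I would invoke Observation~\ref{obs:K-non-cut-deg} to conclude that the between degree of $a$ is at most $2$; since the critical triangle already exhibits $d$ as a between-neighbor of $a$ (via the face $acd$), at most one additional $L_1$-neighbor of $a$ is possible.

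Next, the hypothesis $fa \in \partial G[L_2^B]$ produces a second candidate between-neighbor. Because $a,f \in L_2^B$, one side of the edge $fa$ in the embedding lies outside $G[L_2^B]$; since $G$ is a disk triangulation and the face on that side is enclosed by $\partial B$, it must be a triangle $afg$ with $g \in L_1$. Hence $g$ is a between-neighbor of $a$, and either $g=d$ or $g$ is the unique second $L_1$-neighbor of $a$.

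I would then split into the cases $g=d$ and $g\neq d$. In the first case, the faces $acd$, $acb$, and $afd$ force the rotation at $a$ to contain $f,d,c,b$ consecutively, and in particular give the edge $fd$. In the second case, the face of $G$ between $g$ and $d$ at $a$ is a triangle $adg$, forcing $gd\in E(G)$; since $C_B$ is a face of $G[L_1]$ and so admits no chords, $g$ must be a $C_B$-neighbor of $d$.

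The remaining work is to propagate these constraints to $f$ and to trace the rotations at $d$ and $e$. By Claim~\ref{clm:at-least-deg-5}, both $d$ and $e$ have degree at least $5$; combined with the disk-triangulation property, this forces additional between-edges from $f$ (and potentially from $b$) to neighbors of $d$ or $e$ on $C_B$. I expect these to produce either a non-separating vertex in $\{b,f\}$ with between degree at least $3$, contradicting Observation~\ref{obs:K-non-cut-deg}, or a vertex in $\{d,e\}$ of degree less than $5$, contradicting Claim~\ref{clm:at-least-deg-5}.

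The hardest part will be managing the subcases arising from whether the face on the far side of $ab$ from $c$ is $abf$ or some other triangle $abh$, and from whether $a$ has additional interior $K$-neighbors beyond $b$. Careful bookkeeping of the forced edges and their interaction with the outer boundary of $K$ and the cycle $C_B=\partial B$ should close each subcase.
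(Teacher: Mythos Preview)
Your plan diverges entirely from the paper's argument, and the contradiction you are banking on is not there. The paper does not reason structurally about rotations or between-degrees beyond what is needed to set up the picture; instead it exploits the minimal-counterexample hypothesis directly. Assuming $fa\in\partial G[L_2^B]$ and $a$ non-separating, the paper contracts the edges $ac$ and $ce$ (and removes parallel edges) to obtain a smaller 2-outerplanar minor $G'$, takes two induced p-forests $F_0,F_1$ of $G'$ by minimality, and then by a short case split on which of $F_0,F_1$ contain $f,e,b,d$ shows how to insert $a$ and $c$ back so that the extended $F_0,F_1$ are induced p-forests of $G$. This is exactly the same contract-and-extend device used in Claims~\ref{clm:at-least-deg-5} and~\ref{clm:critic-pair}; the hypotheses on $a$ are used to control $a$'s neighborhood so that reinserting $a$ cannot create a cycle.

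By contrast, your outline never invokes minimality, and the hoped-for structural dead end does not materialize. Having $a$ non-separating with between-degree at most $2$, with $fa\in\partial G[L_2^B]$, and with $d,e$ of degree at least $5$ is not locally inconsistent: $a$ may have further $L_2$-neighbors inside $K$; $d$ and $e$ can reach degree $5$ via their two $\partial B$-neighbors together with $a,c$ (resp.\ $b,c$) and possibly other $L_2$-vertices; and nothing in your chain of forced edges compels $b$ or $f$ to pick up a third between-neighbor. The sentence ``I expect these to produce\ldots'' is precisely where a proof would have to happen, and no mechanism is in sight. You should abandon the rotation-tracing plan and use the contraction argument instead.
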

\begin{proof}
For a contradiction (and w.l.o.g), we assume that $fa \in \partial G[L_2^B]$ and $a$ is  non-separating. See Figure~\ref{fig:cut-vertex-critical}. Let $G'$ be the graph obtained from $G$ by contracting $ac$ and $ce$ and removing parallel edges. Then, $G'$ is a minor of $G$ (and so is 2-outerplanar) with fewer vertices. Let $F_0,F_1$ be two induced p-forests of $G'$, which are guaranteed to exist by the minimality of $G$.  Without loss of generality, we assume that $f\in F_0$. We consider two cases:
\begin{enumerate}
\item If $e \in F_0$, then $d,b$ are in $F_1$. If edge $fe \not\in G$, then, $F_0 \cup \{a,c\}, F_1$ are two induced p-forests of $G$. If $fe \in G$, cycle $\{f,a,c,e\}$ separates $d$ from $b$ so $d$ and $b$ are in different trees of $F_1$. Thus, $F_1 \cup \{c\}, F_0 \cup \{a\}$ are two induced p-forests of $G$. 
\item Otherwise, $e \in F_1$. We have three subcases:
	\begin{enumerate}
		\item If $b,d$ are both in $F_0$, then $F_0, F_1 \cup \{a,c\}$ are two induced p-forests of $G$.
		\item If $b,d$ are both in $F_1$, then $F_0 \cup \{a,c\}, F_1$ are two induced p-forests of $G$.
		\item If $b,d$ are in different forests of $G'$, then, $F_0 \cup \{c\}, F_1 \cup \{a\}$ are two induced p-forests of $G$.
	\end{enumerate}
\end{enumerate}
In each case, the resulting p-forests contradicts that $G$ is a minimal order counter example. 
\end{proof}

\begin{figure}[tbh]
  \centering
    \includegraphics[height=1.3in]{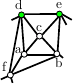}
      \caption{The critical triangle $abc$ with edge $fa \in \partial G[L_2]$. Hollow vertices are in $L_2$. }
       \label{fig:cut-vertex-critical}
\end{figure}

If the edge $fb$ is shared with another critical triangle $fbg$ with top $g$, then we call $\{abc,bfg\}$ a \emph{pair of critical triangles}. See Figure~\ref{fig:critical-pair}. Note that we are assuming that $f$ is a common neighbor of $a$ and $b$ in $L_2$.  

\begin{figure}[tbh]
  \centering
    \includegraphics[height=1.3in]{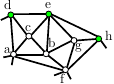}
      \caption{A pair of critical triangles $abc$ and $bfg$. Hollow vertices are in $L_2$}
       \label{fig:critical-pair}
\end{figure}

\begin{claim}\label{clm:critic-pair}
If there exists a pair of critical triangles $abc$ and $bfg$ in $K$, then $b$ must be the separating vertex of $K$.
\end{claim}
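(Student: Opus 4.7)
The plan is to apply the minor-contraction template of Claims~\ref{clm:at-least-deg-5} and \ref{clm:b-cut-vertex}. I would suppose for contradiction that $b$ is not the separating vertex of $K$. By Observation~\ref{obs:K-non-cut-deg}, $b$ then has between degree at most $2$. Let $d,e$ be the two $L_1$-neighbors of $c$ with $da,eb\in E(G)$ and let $d_g,e_g$ be the two $L_1$-neighbors of $g$ with $d_g f,e_g b\in E(G)$; both pairs are forced by the disk-triangulation around the tops $c$ and $g$, as in the setup preceding Claim~\ref{clm:at-least-deg-5}. Since $e$ and $e_g$ are both $L_1$-neighbors of $b$ and $b$'s between degree is at most $2$, these are its only $L_1$-neighbors.

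Next I would form the minor $G'$ obtained from $G$ by contracting the two edges $cb$ and $bg$, thereby identifying $b,c,g$ into a single vertex $\tilde b$, and removing any parallel edges. Then $G'$ is a 2-outerplanar minor of $G$ with $|V(G)|-2$ vertices, so by the minimality of $G$ it admits two induced p-forests $F_0,F_1$. Without loss of generality, $\tilde b\in F_0$.

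The final step is to lift $F_0,F_1$ to induced p-forests of $G$ by re-inserting $b,c,g$ together with the edges $bc$ and $bg$, and placing each of $b,c,g$ into either $F_0$ or $F_1$. Because $c$ has only the four $G$-neighbors $\{a,b,d,e\}$ and $g$ has only the four neighbors $\{b,f,d_g,e_g\}$, there is always a forest into which each of $c,g$ can be placed without closing a cycle. I would case split on which of $F_0,F_1$ contains each of $a$ and $f$ and, within each case, on whether certain chord edges (such as $de$, $d_ge_g$, or edges joining vertices of the two triangles in $L_1$) are present; in each resulting subcase, the explicit placement of $b,c,g$ follows the same ``if the chord is absent, add both; otherwise split" pattern as the case splits in Claims~\ref{clm:at-least-deg-5} and \ref{clm:b-cut-vertex}, yielding two induced p-forests of $G$ and contradicting the minimality of $G$.

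The main obstacle will be controlling the case analysis in the final step: the pair of critical triangles essentially doubles the configuration of Claim~\ref{clm:at-least-deg-5}, and one must keep track of short cycles that could close through the re-inserted path $c\text{-}b\text{-}g$ or through the shared $L_1$-neighbors of $b$. The between-degree-$2$ constraint on $b$, which is supplied exactly by the non-separating assumption via Observation~\ref{obs:K-non-cut-deg}, is what pins down $b$'s $L_1$-neighbors to be precisely $\{e,e_g\}$ and keeps the number of essentially different subcases small; symmetry between the roles of $c$ and $g$ further collapses the analysis.
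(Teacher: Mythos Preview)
Your plan follows the right template, but the specific contraction you propose is too weak and the promised case analysis does not close. Suppose (as actually happens once one proves it) that $e=e_g$; then $b$'s neighbourhood in $G$ is exactly $\{a,c,e,g,f\}$, and after contracting $cb,bg$ the vertex $\tilde b$ has $G'$-neighbours $\{a,d,e,f,h\}$ (with $h=d_g$). Now take p-forests of $G'$ with $\tilde b,a\in F_0$ and $d,e,f,h\in F_1$; locally this is consistent, since $d\!-\!e\!-\!h\!-\!f$ is just a path in $F_1$. A direct check of all eight placements of $\{b,c,g\}$ into $F_0,F_1$ shows each one closes a short cycle: putting $b,c$ together with $a$ yields the triangle $abc$; putting $c$ with $d,e$ yields $cde$; putting $g$ with $e,h$ yields $geh$; putting $b$ alone into $F_1$ yields the $4$-cycle $b\!-\!e\!-\!h\!-\!f\!-\!b$; and so on. So your contraction does not admit a lift in this case, and the ``if the chord is absent, add both; otherwise split'' heuristic does not rescue it.

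The paper's proof differs in two essential ways. First, it uses Claim~\ref{clm:at-least-deg-5} (not Observation~\ref{obs:K-non-cut-deg}) to force $i\equiv e$: if the two $L_1$-neighbours of $b$ coming from the $c$-side and the $g$-side were distinct, the $L_1$-vertex $e$ would have degree~$4$. Second, with this identification in hand it contracts four edges $ec,eb,eg,eh$, collapsing $\{e,c,b,g,h\}$ to a single $L_1$-vertex; the case analysis is then on $a,d,f,h$ and goes through cleanly. The point is that pulling $e$ (and $h$) into the contracted vertex is exactly what kills the bad configuration above, and establishing $e=i$ is what makes that contraction available.
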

\begin{proof}
Note that neither $c$ nor $g$ can be the separating vertex by definition of critical triangles. Suppose for contradiction that $b$ is non-separating. Let $d,e$ be two neighbors of $c$ as defined above and $i$ and $h$ be the neighbors of $g$ in $L_1$. We first argue that $i \equiv e$. Suppose otherwise. Since $G$ is a disk triangulation, $ec,eb, ig, ih, ib$ are edges of $G$. Let $P$ be the subpath of $\partial G$ between $e$ and $i$ that does not contain $d$ and $h$. Note that $P$ could simply be edge $ei$. Since $B$ is a facial block that shares at most one edge with other facial blocks and $b$ is non-separating, $e$ has exactly one neighbor on $P$. That implies $e$ would have degree $4$, contradicting Claim~\ref{clm:at-least-deg-5}.

We also note that $h \not= d$ (for otherwise, $e$ would not be in $L_1$) and $hf \in E(G)$. See Figure~\ref{fig:critical-pair}. Let $G'$ be the graph obtained from $G$ by contracting $ec,eb,eg$ and $eh$ and removing parallel edges. Thus, $G'$ is a minor of $G$ with fewer vertices.  By minimality, $G'$ has two induced p-forests $F_0,F_1$. Without loss of generality, we assume that $a \in F_0$. We will reconstruct two induced p-forests of $G$  by considering two cases:

\begin{enumerate}
\item If $h \in F_0$, then $d,f \in F_1$. If edge $ah \in G$, then, by planarity, $d$ and $f$ are in different trees of $F_1$. Thus, $F_1 \cup \{e,b\}$ has no cycle which implies $F_1\cup \{e,b\}, F_0\cup \{c,g\}$ are two induced p-forests in $G$. Otherwise, $F_0\cup \{b,e\}$ has no cycle. Thus, $F_0\cup \{b,e\}, F_1 \cup \{c,g\}$ are two induced p-forests in $G$. 
\item Otherwise, $h \in F_1$. We have four subcases:
	\begin{enumerate}
		\item If $d,f$ are both in $F_1$, then $F_0 \cup \{c,e,g\}, F_1 \cup \{b\}$ are two induced p-forests of $G$.
		\item If $d,f$ are both in $F_0$, then $F_0 \cup \{e\}, F_1 \cup \{b,c,g\}$ are two induced p-forests of $G$.
		\item If $d \in F_0, f \in F_1$, then $F_0 \cup \{e,g\}, F_1 \cup \{b,c\}$ are two induced p-forests of $G$.
		\item If $d \in F_1, f \in F_0$, then $F_0 \cup \{c,g\}, F_1 \cup \{b,e\}$ are two induced p-forests of $G$.		
	\end{enumerate}
\end{enumerate}
Thus, in all cases, the resulting induced p-forests contradict that $G$ is a counterexample.
\end{proof}

We are now ready to complete the proof of Theorem~\ref{thm:2-arbor} by considering a triangle of $K$ of $G[L_2^B]$, say $uvw$, containing the separating vertex $v$ of $K$ and has the most edges in common with $\partial K$. Since $v$ is separating, $uvw$ contains at least one edge in $\partial K$. We note that $K^*\setminus (\partial K)^*$ where $(\partial K)^*$ is the dual vertex of the infinite face of $K$, is a tree that we denote by $T^*_K$. Recall that $K$ is a block of $G[L^B_2]$. We root $T^*_K$ at the vertex corresponding to the triangle $uvw$. Consider the deepest leaf $x^* \in T^*_K$ and its parent $y^*$. Let $abc$ be the triangle corresponding to $x^*$ such that the dual edge of $ab$ is $x^*y^*$. Then $d_K(c) = 2$. Since $K \geq 4$, $abc \not\equiv uvw$ and thus, it is a critical triangle with top $c$. Let $abf$  be the triangle that corresponds to $y^*$. Note here it may be that $abf \equiv uvw$.  We have three cases:

\begin{enumerate}
\item If $d_{T^*_K}(y^*) = 1$, then $abf \equiv uvw$. Thus, two edges $fa,fb$ are both in $\partial K$ but only one of the two vertices $a,b$ can be the separating vertex of $K$. This contradicts Claim~\ref{clm:b-cut-vertex}.
\item If $d_{T^*_K}(y^*) = 2$, then exactly one of two edges $af, bf \in \partial K$; w.l.o.g, we assume that $bf \in \partial K$. Then, by Claim~\ref{clm:b-cut-vertex}, $b$ must be the separating vertex of $K$. Thus, only two triangles $abc$ and $abf$ contain the separating vertex. Since $uvw$ is the triangle containing the separating vertex with most edges in $\partial K$, $uvw \equiv abc$, contradicting our choice of triangle $abc$.
 \item Otherwise, we have $d_{T^*_K}(y^*) = 3$. Then, none of $\{ab,bf,af\}$ is in $\partial K$, so $abf \not\equiv uvw$. Let $z^*$ and $t^*$ be the other two neighbors of $y^*$ in $T^*_K$ with $t^*$ as the parent of $y^*$. Then, $x^*$ and $z^*$ have the same depth. By our choice of $x^*$, $z^*$ must also be a leaf. Thus, the triangle, say $bfg$, corresponding to $z^*$ is critical. Thus $\{abc, bfg\}$ is a pair of critical triangles. Since $t^*$ is the parent of $y^*$, $b$ cannot be the separating vertex of $K$, contradicting Claim~\ref{clm:critic-pair}.
\end{enumerate}
This completes the proof of Theorem~\ref{thm:2-arbor}. \qed

\section{2-outerplanar graphs have large induced outerplanar graphs}\label{sec:2-to-1}

In this section, we prove:
\begin{theorem} \label{thm:2outer}
Let $G$ be a 2-outerplanar graph on n vertices, and let $L_1, L_2$ be the partition of a 2-outerplanar graph into layers.  Suppose that each connected component of $G[L_2]$ is two-connected. $G$ has an induced outerplanar subgraph on at least $\frac{2n}{3}$ vertices whose outerplanar embedding is induced from $G$.
\end{theorem}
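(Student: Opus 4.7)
My plan is to prove Theorem~\ref{thm:2outer} by induction on $|V(G)|$, using Hosono's theorem on outerplanar induced forests together with a structural analysis of the facial blocks of $G$. First, I would dispose of the trivial cases: if $|L_1| \geq \frac{2n}{3}$ or $|L_2| \geq \frac{2n}{3}$, then $G[L_1]$ or $G[L_2]$ already gives the desired induced outerplanar subgraph, since each layer is 1-outerplanar with the induced embedding. Thus we may assume both $|L_1|$ and $|L_2|$ lie in the range $(\frac{n}{3}, \frac{2n}{3})$, forcing us to keep vertices from both layers.

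Next, I would apply Hosono's theorem to the outerplanar graph $G[L_2]$ to obtain an induced forest $F \subseteq L_2$ with $|F| \geq \frac{2}{3}|L_2|$. The goal is then to construct a ``cycle-breaking'' set $X \subseteq L_1$ with $|X| \leq \frac{1}{3}|L_1|$ such that $S = (L_1 \setminus X) \cup F$ induces an outerplanar subgraph of $G$ with the induced embedding; this would give $|S| \geq \frac{2n}{3}$ as desired. The correctness condition on $S$ is that every $v \in F$ lies on the outer face of $G[S]$, i.e., no cycle of $G[S]$ encloses any $v \in F$. Since $F$ is a forest, any enclosing cycle of $G[S]$ must use at least one vertex of $L_1$, so it suffices to pick $X$ to hit all such enclosing cycles.

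To construct $X$, I would leverage the facial-block decomposition: for each facial block $B$ with outer cycle $C \subseteq L_1$ and interior $S_C \subseteq L_2$, the cycle $C$ encloses every vertex of $F \cap S_C$, so $X \cap C \neq \emptyset$ is required whenever $F \cap S_C \neq \emptyset$. By inducting on the block-cut tree of $G[L_1]$ and on leaf facial blocks (as in Section~\ref{sec:2-2}), I would pick one shared vertex per facial block, letting adjacent blocks share their cut in $X$ so that $|X|$ remains small. The main obstacle will be handling \emph{mixed} enclosing cycles of $G[S]$ that combine vertices from $L_1 \setminus X$ with paths inside $F$: even after all purely-$L_1$ enclosing cycles are broken, a pair of edges from $L_1 \setminus X$ into $F$ may create a new enclosing cycle. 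I expect to control this by applying Hosono separately inside each $G[S_C]$, and by structural arguments analogous to Claims~\ref{clm:non-cut-deg}--\ref{clm:critic-pair} that rule out pathological configurations; if the direct construction proves elusive, I would fall back on a minimum-counterexample argument in the style of Section~\ref{sec:2-2}, identifying a local reducible configuration (a low-degree interior vertex, a ``critical'' triangle, or a sparsely-connected facial block) whose removal allows the inductive hypothesis to extend back to $G$.
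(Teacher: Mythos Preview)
Your proposal is a sketch with several unfilled gaps, and it diverges substantially from the paper's argument.

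First, the Hosono step is a detour that buys you nothing. With the embedding inherited from $G$, the graph $G[L_2]$ is already outerplanar (this is how $L_2$ is defined), so no cycle contained in $G[L_2]$ encloses any vertex of $L_2$. Hence your key sentence ``any enclosing cycle of $G[S]$ must use at least one vertex of $L_1$'' is already true with $F=L_2$; passing to a Hosono forest only throws away vertices without making the exposure problem easier. The paper accordingly keeps all of $L_2$.

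Second, and more seriously, your construction of $X$ is not a construction. Removing one vertex from the boundary cycle $C$ of a facial block $B$ breaks that single cycle, but it does not expose the vertices of $F\cap S_C$: for every $v\in L_2\cap B$ there are many enclosing cycles built from a subpath of $C$ together with two between-edges into $B$, and you give no mechanism to hit all of them. You explicitly flag these ``mixed enclosing cycles'' as the main obstacle and then defer to unspecified analogues of Claims~\ref{clm:non-cut-deg}--\ref{clm:critic-pair} or to a minimum-counterexample fallback; but those claims were engineered for the vertex-arboricity argument and there is no indication they transfer. Relatedly, the decoupled target $|X|\le \tfrac13|L_1|$ (together with $|F|\ge\tfrac23|L_2|$) is strictly stronger than what the theorem asserts, and you have not shown it is attainable; note that the paper's own algorithm may delete two of the three $L_1$-vertices of an octahedron, so a layerwise $\tfrac13$ bound is not what one should be aiming for.

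The paper's proof avoids all of this with a direct $2{:}1$ charging. It keeps every vertex of $L_2$ and deletes only from $L_1$. The key lemma (Lemma~\ref{lm:match}) produces a matching $M\subseteq\partial G[L_2]$ such that every unmatched vertex of $L_2$ has between degree at least~$2$. Each matched edge $xy$ yields a triple $\{x,y,z\}$ with $z\in L_1$ the third vertex of the triangular face on $xy$; each unmatched $u\in L_2$ yields a triple $\{u,v,w\}$ with $v,w\in L_1$ two of its outer neighbours. A three-phase greedy deletion of one $L_1$-vertex per surviving triple is then arranged so that every deleted vertex is witnessed by two distinct kept vertices (either two exposed $L_2$-vertices, or one exposed $L_2$-vertex plus one $L_1$-vertex that can never be deleted because triples have become disjoint). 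This immediately bounds the number of deletions by $n/3$ and yields the $2n/3$ induced outerplanar subgraph, with no appeal to Hosono, facial-block induction, or counterexample analysis.
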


\begin{remark}
	In the first version of this paper, Theorem~\ref{thm:2outer} appeared without the assumption that each connected component of $G[L_2]$ is two-connected. We thank D'Elia and Frati for pointing out the necessity of this assumption to our argument, particularly in Lemma~\ref{lm:match} below. As D'Elia--Frati show in \cite{DF}, this assumption can be removed.
\end{remark}

Note that $\partial G[L_i]$ is a cactus graph (every edge is in at most 1 cycle). As in Section~\ref{sec:2-2}, we assume w.l.o.g\ that $G$ is connected and a disk triangulation. This gives us:

\begin{observation} \label{obs:face-uvw}
If $uv$ is an edge in $\partial G[L_2]$, then there exists $w\in L_1$ such that $uvw$ is a face.
\end{observation}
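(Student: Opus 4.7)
The plan is to argue directly from the disk triangulation hypothesis, with no induction or case analysis needed. First I would note that since $u,v \in L_2$, the edge $uv$ is not on $\partial G$ (because $\partial G$ is a simple cycle through $L_1$), so both faces of $G$ incident to $uv$ are bounded, and by the disk triangulation assumption each of them is a triangle. In particular, on the side of $uv$ that faces the infinite face of $G[L_2]$ (this side exists by the hypothesis $uv \in \partial G[L_2]$), the adjacent face of $G$ is some triangle $uvw$. The goal is to show that this $w$ must lie in $L_1$.

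Suppose for contradiction that $w \in L_2$. Then both $uw$ and $vw$ are edges of $G[L_2]$, so $uvw$ is a cycle of $G[L_2]$ embedded in the drawing inherited from $G$. Since $uvw$ bounds a face of $G$, no vertex of $G$ lies in its interior; in particular no vertex of $L_2$ does, so the interior of $uvw$ is itself a \emph{bounded} face of $G[L_2]$. But this bounded face is incident to $uv$ on exactly the side we chose to face the infinite face of $G[L_2]$, a contradiction. Hence $w \in L_1$, as claimed. The only real care needed is keeping the two sides of $uv$ straight between $G$ and $G[L_2]$; beyond that bookkeeping I do not anticipate any obstacle.
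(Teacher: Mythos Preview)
Your argument is correct; the paper states this observation without any proof, treating it as an immediate consequence of the disk-triangulation hypothesis, and what you have written is precisely the justification the paper leaves implicit. One small remark: you do not need the parenthetical that $\partial G$ is a simple cycle (which is not argued in Section~\ref{sec:2-to-1}); it suffices that $u,v\in L_2$ are by definition not on $\partial G$, so the edge $uv$ cannot lie on the infinite face.
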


\begin{observation} \label{obs:between-deg-at-least-1}
The between degree of every vertex in $L_2$ is at least 1. 
\end{observation}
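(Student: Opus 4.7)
The plan is to argue by contradiction. Suppose some vertex $v \in L_2$ has between degree $0$, so that every neighbor of $v$ lies in $L_2$.

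The first step is to exploit the disk triangulation structure. Since $v \in L_2$, by definition $v \notin \partial G$, so every face of $G$ incident to $v$ is a triangle. Reading off the cyclic order of edges at $v$ from the fixed embedding then shows that the neighbors $u_1, u_2, \ldots, u_k$ of $v$ form a cycle $C = u_1 u_2 \cdots u_k u_1$ in $G$, and $v$ lies strictly in the interior of $C$ in the planar embedding. By the contradiction hypothesis, every $u_i$ belongs to $L_2$, so $C$ is a cycle of $G[L_2]$.

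The second step is to pass to the induced embedding of $G[L_2] = G - L_1$. Deleting the vertices of $L_1$ leaves both $v$ and the cycle $C$ untouched, so the Jordan-curve separation persists: $v$ is still enclosed by $C$ in the induced embedding of $G[L_2]$. In particular, $v$ does not lie on the outer face of $G[L_2]$. But by the definition of $2$-outerplanarity, $L_2$ is precisely the set of vertices on the boundary of $G - L_1$, so $v$ must lie on that boundary. This contradiction completes the argument.

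There is really no obstacle here; the only thing to be careful about is observing that since $C$ and $v$ all lie in $L_2$, removing the $L_1$ vertices cannot destroy the enclosure of $v$ by $C$, so the contradiction with the outerplanarity of $G[L_2]$ goes through unchanged.
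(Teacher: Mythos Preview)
Your argument is correct. The paper does not actually supply a proof for this observation; it is stated as immediate from the standing assumption that $G$ is a disk triangulation. Your contradiction argument is exactly the natural justification: in a simple disk triangulation the link of any interior vertex is a cycle, and if that cycle lay entirely in $L_2$ it would survive the deletion of $L_1$ and witness that $v$ is not on the boundary of $G-L_1$, contradicting the definition of $L_2$.

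One minor remark: you implicitly use that an interior vertex in a simple disk triangulation has degree at least~$3$, so that the link really is a simple cycle rather than a degenerate $1$- or $2$-cycle. This is true (degree $\le 2$ at an interior vertex would force a loop or a multi-edge), but it is worth stating explicitly since the paper assumes simple graphs throughout.
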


\begin{lemma} \label{lm:twoext}
If $v\in L_2$ has between degree 1, then it is incident to exactly two edges in $\partial G[L_2]$.
\end{lemma}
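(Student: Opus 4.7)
My plan is to exploit the fact that $G$ is a disk triangulation to describe the faces at $v$ explicitly, and then track what happens when we pass to the induced embedding of $G[L_2]$. Let $u$ be the unique neighbor of $v$ in $L_1$, and list the remaining (i.e.\ $L_2$-) neighbors of $v$ as $w_1,\dots,w_k$ in the cyclic order inherited from the embedding of $G$, chosen so that $u$ occupies the cyclic slot between $w_k$ and $w_1$. Because $v\in L_2$ lies strictly inside $\partial G$, every face of $G$ incident to $v$ is a triangle, so the faces around $v$ are exactly $uvw_1,\; w_1vw_2,\; \dots,\; w_{k-1}vw_k,\; w_kvu$. A short check using simplicity of $G$ rules out $k\leq 1$: $k=0$ would make $v$ a pendant, forcing it onto $\partial G$; $k=1$ would make both faces at $v$ equal to the same triangle $uvw_1$, forcing a parallel edge $uw_1$. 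Hence $k\geq 2$, and in particular $w_1\not= w_k$.

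Next I read off the embedding of $G[L_2]$ near $v$. For each $2\leq i\leq k-1$, the edge $vw_i$ has the two triangles $w_{i-1}vw_i$ and $w_ivw_{i+1}$ on its two sides; these triangles have all three vertices in $L_2$ and therefore survive as bounded faces of $G[L_2]$. In particular, such an edge $vw_i$ cannot lie on the unbounded face of $G[L_2]$. On the other hand, the two faces of $G$ adjacent to $v$ on the side of $u$, namely $uvw_1$ and $w_kvu$, both contain the vertex $u\in L_1$ and so are destroyed when we delete $L_1$; they get absorbed, along with every other face of $G$ that touches some deleted $L_1$-vertex, into a single face of $G[L_2]$.

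The step I expect to be the main subtlety is verifying that this merged face really is the \emph{unbounded} face of $G[L_2]$, and not some new bounded region enclosed by $L_2$-edges --- only then are $vw_1$ and $vw_k$ certified as lying in $\partial G[L_2]$. For this I will use that $u\in L_1\subseteq\partial G$, so one of the faces of $G$ adjacent to $u$ is the outer face $f_\infty$; deleting $L_1$ from $G$ therefore merges $uvw_1$ and $w_kvu$ with $f_\infty$, yielding the outer face of $G[L_2]$. Combining this with the previous paragraph shows that $v$ is incident to exactly two edges of $\partial G[L_2]$, namely $vw_1$ and $vw_k$, which completes the proof of the lemma.
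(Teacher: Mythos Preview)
Your proof is correct and follows essentially the same idea as the paper's: both arguments exploit that, since $G$ is a disk triangulation and $v$ is interior, the faces around $v$ are the consecutive triangles on the rotation at $v$, and an edge $vw$ lies on $\partial G[L_2]$ precisely when one of its two adjacent triangles involves the unique $L_1$-neighbor $u$. The paper phrases the ``at most two'' direction as a contradiction via Observation~\ref{obs:face-uvw} (a third boundary edge would force a second $L_1$-neighbor), whereas you argue the contrapositive directly by showing that the interior triangles $w_{i-1}vw_i$ survive as bounded faces; these are the same argument unwound, and your version is a little more explicit about the rotation system and the merging with $f_\infty$.
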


\begin{proof}
Let $u$ be $v$'s neighbor in $L_1$. Since $G$ is a disk triangulation, there exist two triangular faces, say $xuv$ and $yuv$, containing the edge $uv$. As the between degree of $v$ is 1, $x$ and $y$ are in $L_2$, and the edges $xv$ and $yv$ are in $\partial G[L_2]$. Therefore, $v$ is incident to at least two edges in $\partial G[L_2]$. 

Suppose for the sake of contradiction that $v$ is incident to more than two edges in $\partial G[L_2]$.  Let $w$ be a neighbor of $v$ such that $w \not\in \{x,y\}$. Then by Observation~\ref{obs:face-uvw}, there exists $s\in L_1$ such that $vws$ is a face. Since $w\notin \{x, y\}$ and $G$ is simple, $s \neq u$. This implies $v$ has between degree at least $2$; contradicting that $v$'s between degree is $1$. 
\end{proof}

\begin{lemma} \label{lm:uv-edge-partial-B}
If a facial block $B$ contains a vertex in $L_2$, then endpoints of any edge $uv \in \partial B$ are adjacent to a common vertex in $L^B_2$.
\end{lemma}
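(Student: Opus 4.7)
The plan is to fix an edge $uv \in \partial B$ and exhibit the required common neighbor as the third vertex of the triangular face of $G$ on the interior side of $\partial B$ at $uv$. Write $C = \partial B$; by the definition of a facial block, $C$ is a facial cycle of $G[L_1]$ bounding a finite face, and $V(B) = C \cup S_C$. Because every vertex of $L_1$ lies on $\partial G$, and $C$ is a simple closed curve whose exterior contains the (connected) infinite face of $G$, no $L_1$-vertex can lie strictly inside $C$. Hence $S_C \subseteq L_2$, and so $L_2^B = S_C$.

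Since $G$ is a disk triangulation, the face of $G$ incident to $uv$ on the interior side of $C$ is a triangle $uvw$, with $w \in V(B) = C \cup S_C$. I would then argue that $w \in S_C$. Suppose instead $w \in C$. Then $uw$ and $vw$ are edges of $G[L_1]$ drawn in the closed region bounded by $C$. Since $C$ bounds a face of $G[L_1]$, the open interior of this region contains no edges of $G[L_1]$, so both $uw$ and $vw$ must lie on $C$ itself. Consequently $C$ is exactly the triangle $uvw$, and this triangle is simultaneously a face of $G$ and the finite face of $G[L_1]$ bounded by $C$. Therefore $S_C = \emptyset$, contradicting the hypothesis that $B$ contains an $L_2$-vertex.

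Combining these observations gives $w \in S_C = L_2^B$, and by construction $w$ is adjacent to both $u$ and $v$. The main subtlety is ruling out $w \in C$: one has to combine the fact that $C$ bounds a face of $G[L_1]$ (so its interior is free of $G[L_1]$-edges) with the disk-triangulation hypothesis on $G$ to force $C$ to be a vertex-free triangle, contradicting the presence of an $L_2$-vertex in $B$. Setting up $L_2^B = S_C$ at the outset is the bookkeeping that makes the final identification $w \in L_2^B$ immediate.
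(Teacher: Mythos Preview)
Your argument is correct and follows essentially the same line as the paper's proof: take the interior triangular face $uvw$ at the edge $uv$, and rule out $w \in \partial B$ by observing that this would force $\partial B$ to equal the triangle $uvw$ (since $\partial B$ bounds a face of $G[L_1]$ and hence has no $G[L_1]$-edges in its interior), making $S_C$ empty. Your write-up is in fact more explicit than the paper's about why $w \in C$ forces $C = uvw$, and the preliminary identification $L_2^B = S_C$ is a nice bit of bookkeeping that the paper leaves implicit.
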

\begin{proof}
Since $G$ is a triangulation, there is a vertex $w \in B$ such that $uvw$ is a triangular face; thus $uvw$ contains no vertex of $L_2$. Suppose that $w \in \partial B$, then $uvw$ is an induced cycle of $G[L_1]$. Thus, $uvw \equiv \partial B$; contradicting that $B$ contains a vertex in $L_2$.
\end{proof}

\begin{lemma} \label{lm:match}
 There exists a matching $M\subseteq \partial G[L_2]$ with the following property: 
\begin{equation} \label{eq:cond}
\mbox{If } v\in L_2 \setminus V(M) \mbox{ then }v \mbox{ has between degree at least 2}. 
\end{equation}

\end{lemma}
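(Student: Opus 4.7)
The plan is to set $T = \{v \in L_2 : v \text{ has between degree } 1\}$ and exhibit a matching $M \subseteq E(\partial G[L_2])$ that saturates $T$; property~\eqref{eq:cond} then follows automatically because any $v \in L_2 \setminus V(M)$ cannot lie in $T$ and so must have between degree at least $2$.

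The key structural step, which I would establish first, is that no cycle of $\partial G[L_2]$ has its entire vertex set contained in $T$. Suppose $C = v_1 v_2 \cdots v_k v_1$ were such a cycle, and let $u_i \in L_1$ be the unique $L_1$-neighbor of $v_i$. By Lemma~\ref{lm:twoext}, the two edges of $\partial G[L_2]$ incident to $v_i$ are precisely $v_i v_{i-1}$ and $v_i v_{i+1}$, and each sits in a triangular face $u_i v_i v_{i\pm 1}$ of $G$; in particular $u_i$ is adjacent to both $v_{i-1}$ and $v_{i+1}$. Since $v_{i+1} \in T$ has a unique $L_1$-neighbor, we get $u_i = u_{i+1}$, and walking around $C$ collapses all $u_i$ into one $u \in L_1$ adjacent to every vertex of $C$. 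The triangular faces $u v_i v_{i+1}$ then exhaust the cyclic order of neighbors around $u$, so $u$ has no neighbor outside $V(C) \subseteq L_2$. This contradicts $u \in L_1 = V(\partial G)$ lying on the boundary walk, which forces $u$ to have $L_1$-neighbors.

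Since each vertex of $T$ has degree exactly $2$ in $\partial G[L_2]$ (Lemma~\ref{lm:twoext}) and the induced subgraph $H_T = \partial G[L_2][T]$ is acyclic by the previous step, $H_T$ is a disjoint union of paths and isolated vertices. I would build $M$ by placing a perfect internal matching on each even-order path component of $H_T$ along its own edges, and by matching one endpoint of each odd-order component to one of its non-$T$ neighbors in $\partial G[L_2]$ (an isolated $T$-vertex has two non-$T$ neighbors available; the two endpoints of a longer odd path each have one).

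The main obstacle is to show that the external choices made for the odd components can be coordinated so that no non-$T$ vertex is claimed twice. I would address this by processing the block-cut tree of the cactus $\partial G[L_2]$ from the leaves inward. Within each cycle-block $C$, the non-$T$ vertices of $V(C)$ partition the $T$-vertices into maximal consecutive runs separated by gaps of size at least one; assigning each odd run to a distinct adjacent anchor by sweeping consistently around $C$ yields a conflict-free local matching. When a cycle-block attaches to its parent in the block-cut tree at a cut vertex $c$, $c$ is reserved as an anchor for the leaf block first, with the parent processed only after $c$'s status is fixed; the special case $c \in T$ forces $c$ to have degree $2$ consisting of two bridges (by the cactus-degree analysis), which pins down $c$'s matching canonically. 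Verifying that this inward sweep never deadlocks is precisely what the first structural step buys.
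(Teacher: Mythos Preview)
Your route is genuinely different from the paper's: the paper argues by strong induction on the number $|L|$ of between-degree-$1$ vertices, contracting such a vertex $v$ together with its two $\partial G[L_2]$-neighbors (one of which is chosen outside $L$), applying the hypothesis to the contracted graph, and then lifting the smaller matching back through a short case analysis. There is no global structural lemma and no block--cut-tree sweep in the paper's argument.

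Your structural step (no cycle of $\partial G[L_2]$ lies entirely in $T$) and the consequence that $H_T$ is a disjoint union of paths are both correct. The genuine gap is the anchor coordination for the odd components. Your sweep assumes that the non-$T$ vertices within each cycle-block suffice to absorb all odd runs and that reserving the attaching cut vertex for a child block can never cascade into a conflict at a cut vertex shared by several children; this is not a consequence of ``every $T$-vertex has degree $2$ and no cycle is entirely in $T$.'' Concretely, take three $4$-cycles $C_i = w\,t_1^i\,w_i\,t_2^i$ glued only at $w$, with $T=\{t_j^i : 1\le i\le 3,\ j=1,2\}$. Each $C_i$ has two singleton $T$-runs and only the two anchors $w,w_i$, so each $C_i$ is forced to claim $w$; but $w$ can be matched once, and in fact no matching of this cactus saturates $T$ (six $T$-vertices versus four non-$T$ vertices). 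This cactus \emph{is} realizable as $\partial G[L_2]$ for a connected $2$-outerplanar disk triangulation: let $L_1$ be a triangle $abc$, triangulate each $C_i$ by the chord $ww_i$, and attach each $t_j^i$ to exactly one of $a,b,c$ so that all finite faces are triangles; one then checks that each $t_j^i$ has between degree $1$ while $w$ and the $w_i$ have between degree at least $2$. So the ``no all-$T$ cycle'' observation alone does not buy the non-deadlocking of your sweep, and the proposal as written does not establish the lemma; any direct construction along your lines would have to feed in more of the planar structure than you currently use.
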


\begin{proof}
Let $L$ be the set of vertices of between degree 1 in $L_2$. We proceed by strong induction on $|L|$. If $|L|=0$, any matching $M \subseteq \partial G[L_2]$ has property \eqref{eq:cond}.

Let $v\in L$ and $u \in L_2 \setminus L$ be vertices such that $uv \in \partial G[L_2]$; $v$ exists by Lemma~\ref{lm:uv-edge-partial-B}. By Lemma~\ref{lm:twoext}, $v$ has exactly one other neighbor $w$ such that $vw \in \partial G[L_2]$. Contract $uv$ and $wv$ to $v$ and delete parallel edges and loops; let the resulting graph be $G'$. As each connected component of $G[L_2]$ is two-connected, $G'$ is again a disk triangulation. Since $v$ now has between degree at least $2$, the number of vertices of between degree $1$ in $G'$ is strictly less than $|L|$. Therefore, by the inductive hypothesis, there exists a matching $M'\subseteq \partial G'[L_2]$ with property \eqref{eq:cond}. Now, consider $M'$ as a matching in $\partial G[L_2]$. 

If $v$ is not covered by $M'$ in $G'$, then $u, v, w$ are not covered by $M'$ in $G$. Then, $M' \cup \{vw\}$ is a matching and has property \eqref{eq:cond}, since $u$ has between degree at least 2 as argued above. 

If $v x \in M'$ for some $x \in G'$, then either $u x \in \partial G[L_2]$ or $w x \in \partial G[L_2]$. In the first case, let $M =( M' \setminus \{v x\}) \cup \{u x, v w\}$; in the second, let  $M =( M' \setminus \{v x\}) \cup \{w x, u w\}$. In both cases, $M$ is a matching of $\partial G[L_2]$ with property \eqref{eq:cond}. 
\end{proof}

We are now ready to prove Theorem~\ref{thm:2outer}. To find the vertices inducing a large outerplanar graph in $G$, we delete vertices in $L_1$ until all vertices in $L_2$ are ``exposed'' to the external face. To ensure that the resulting outerplanar graph is sufficiently large, we delete vertices in $L_1$ that expose 2 vertices in $L_2$ or otherwise ensure 2 vertices will be included in the outerplanar graph.

Let $M$ be a matching as guaranteed by Lemma~\ref{lm:match}. We create a list $K$ of triples such that each vertex in $L_2$ occurs in exactly one triple. For each $u\in L_2$ not covered by $M$, $u$ has between degree at least $2$, and we add $\{u, v, w\}$ to $K$, where $v, w$ are neighbors of $u$ in $L_1$. For each edge $xy\in M$, by Observation~\ref{obs:face-uvw}, there exists $z\in L_1$ such that $xyz$ is a face, and we add $\{x, y, z\}$ to $K$.

We then delete vertices from $L_1$ as follows:
\begin{tabbing} 
\hspace{16pt} \= 1. While \= there exists $\{u, v, w\}\in K$ such that $\{u, v, w\}\cap L_1=\{v\}$ \+\+\\

delete $v$ from $G$ and delete all triples containing $v$ from $K$;\-\\
 
2. While there exists $v\in L_1$ such that $v$ is in two or more distinct triples of $K$\+ \\

delete $v$ from $G$ and delete all triples containing $v$ from $K$;\-\\

3. While $\{u, v, w\} \in K$\+\\

delete $v \in L_1$ from $G$ and delete $\{u, v, w\}$ from $K$.\\
\end{tabbing}

Note that if $v\in L_1$ is deleted from $G$, all $L_2$ vertices in a triple with $v$ are exposed. Therefore, the undeleted vertices induce an outerplanar subgraph of $G$.

In the first two steps, at least two $L_2$ vertices were exposed for every deleted $L_1$ vertex. In the final step, all triples are disjoint, so each deletion of an $L_1$ vertex exposes one $L_2$ vertex and ensures that one $L_1$ vertex will not be deleted; again, 2 vertices are included in the induced outerplanar subgraph for every deleted vertex. This means that the subgraph contains at least two thirds of the vertices of $G$. This complete the proof of Theorem~\ref{thm:2outer}. \qed

\bigskip
This result is tight, as the disjoint union of multiple octahedrons (see Figure~\ref{fig:for}) is 2-outerplanar, and its largest induced outerplanar subgraph is on $\frac{2}{3}$ of its vertices. The result is also tight for arbitrarily large  connected 2-outerplanar graphs, as the same property holds for graphs constructed by connecting disjoint octahedrons as shown in Figure~\ref{fig:manyoct}.

\begin{figure}
\centering
\includegraphics[height=1.5in]{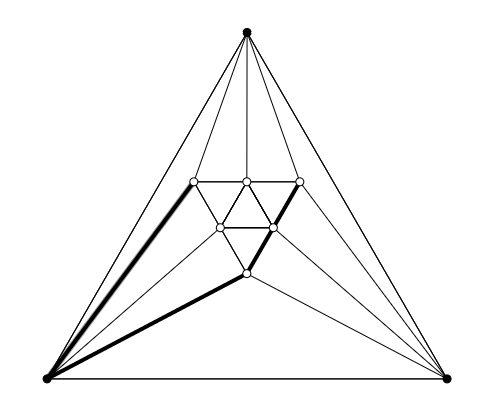}
\caption{The outerplanar induced subgraph of this graph found by the algorithm in Theorem~\ref{thm:2outer} is induced by the white vertices. Every induced forest on at least half of the vertices of this graph (an example is shown by the bolded edges) includes vertices not in this outerplanar subgraph.}
\label{fig:for}
\end{figure}

\begin{figure}
\centering
\includegraphics[height=1.5in]{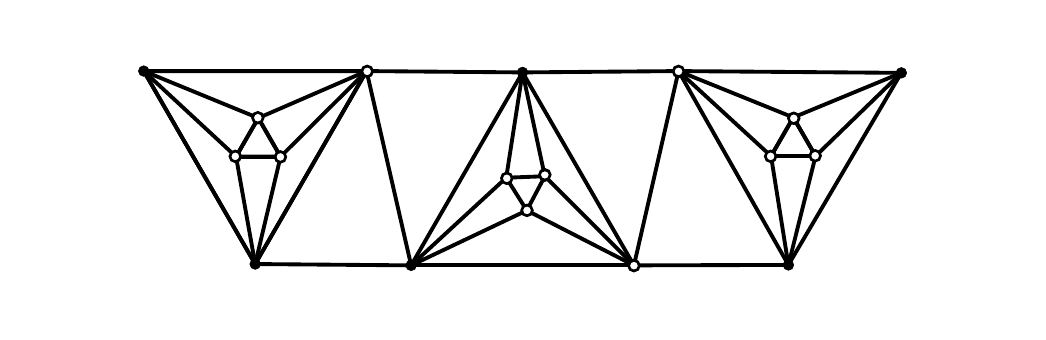}
\caption{A 2-outerplanar graph whose largest induced outerplanar subgraph is on $\frac{2}{3}$ of its vertices. The white vertices induce such a subgraph, found by the algorithm in Theorem~\ref{thm:2outer}.}
\label{fig:manyoct}
\end{figure}

%
%

\section{Future directions}

We define an \emph{induced outerplane graph} of a planar graph $G$ is an induced subgraph of $G$ whose embedding inherited from $G$ is an outerplanar embedding. We point out that our last result implies an improvement to a graph drawing result of  Angelini, Evans, Frati, and Gudmundsson~\cite{AEFG16} for the class of 2-outerplanar graphs. A simultaneous embedding with fixed edges and without mapping (\textsc{SEFENoMap}) of two planar graphs $G_1$ and $G_2$ of the same size $n$ is a pair of planar drawings of $G_1$ and $G_2$ that maps any vertex of $G_1$ into any vertex of $G_2$ such that: (i) vertices of both graphs are mapped to the same point set in a plane and (ii) every edge that belongs to both $G_1$ and $G_2$ must be represented by the same curve in the drawing of two graphs. The \textsc{OptSEFENoMap} problem asks for the maximum $k \leq n$ such that: given any two planar graphs $G_1$ and $G_2$ of size $n$ and $k$, respectively, there exists an induced subgraph $G_1'$ of $G_1$ such that $G'_1$ and $G_2$ have a \textsc{SEFENoMap} where the drawing of $G'_1$ is inherited from a planar drawing of $G$. The result of Gritzmann et al.~\cite{GMPP91}, implies that $k$ can be as large as the size of any induced outerplane graph of $G_1$. Angelini, Evans, Frati, and Gudmundsson (Theorem 1~\cite{AEFG16}) showed that any planar graph $G$ of size $n$ has an induced outerplane graph of size at least $\lceil n/2 \rceil$ which implies $k \geq \lceil n/2 \rceil$ by the result of Gritzmann et al.~\cite{GMPP91}. Our Theorem~\ref{thm:2outer} implies the following corollary, which is an improvement of the result of Angelini, Evans, Frati, and Gudmundsson for the class of 2-outerplanar graphs. 

\begin{corollary}
Every $n$-vertex 2-outerplanar graph and every $\lceil 2n/3 \rceil$-vertex planar graph have a \textsc{SEFENoMap}.
\end{corollary}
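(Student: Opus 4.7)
The plan is to assemble two ingredients: Theorem~\ref{thm:2outer}, which produces a large induced outerplane subgraph of any 2-outerplanar graph, and the consequence of Gritzmann et al.~\cite{GMPP91} recalled in the paragraph just before the corollary statement, namely: if $G_1$ has an induced outerplane subgraph on $k$ vertices, then $G_1$ together with any $k$-vertex planar graph admits a \textsc{SEFENoMap} via that induced subgraph.

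Concretely, let $G_1$ be a 2-outerplanar graph on $n$ vertices and let $G_2$ be an arbitrary planar graph on $\lceil 2n/3\rceil$ vertices. First, I would apply Theorem~\ref{thm:2outer} to $G_1$ to obtain an induced subgraph $H$ of $G_1$ on at least $\lceil 2n/3\rceil$ vertices whose embedding inherited from $G_1$ is outerplanar; thus $H$ is an induced outerplane subgraph of $G_1$. Second, to meet the equal-size requirement of a \textsc{SEFENoMap}, I would pass to any induced subgraph $G_1'\subseteq H$ on exactly $\lceil 2n/3\rceil$ vertices. Vertex deletion preserves outerplanarity of the inherited embedding—the remaining vertices still lie on the outer face—so $G_1'$ is still an induced outerplane subgraph of $G_1$ of exactly the same order as $G_2$. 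Third, feeding the outerplane graph $G_1'$ and the planar graph $G_2$ into the Gritzmann et al.\ result produces a \textsc{SEFENoMap} in which the inherited drawing of $G_1'$ is the fixed outerplane drawing, and this is the desired \textsc{SEFENoMap} between $G_1$ and $G_2$.

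There is no real obstacle in this argument: the heavy lifting is done by Theorem~\ref{thm:2outer}, and the bridge to graph drawing is already spelled out in the literature. The only mildly subtle point to flag is that Theorem~\ref{thm:2outer} was carefully phrased so that the outerplane embedding is literally the one inherited from $G_1$, which is exactly the form of input Gritzmann et al.'s theorem consumes; if Theorem~\ref{thm:2outer} had only guaranteed a large induced subgraph that is abstractly outerplanar (without controlling its embedding), an extra argument would be needed.
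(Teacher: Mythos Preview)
Your proposal is correct and matches the paper's own reasoning: the corollary is stated without a separate proof and is asserted to follow directly from Theorem~\ref{thm:2outer} together with the Gritzmann et al.\ consequence recalled just before it. Your added step of trimming $H$ down to an induced outerplane subgraph on exactly $\lceil 2n/3\rceil$ vertices is a harmless and correct detail that the paper leaves implicit.
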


\noindent Based on Theorem~\ref{thm:2outer}, we conjecture that:
\begin{conjecture}\label{conj:3-outer}
Any 3-outerplanar graph on $n$ vertices contains an induced outerplane graph of size at least $\frac{2n}{3}$.
\end{conjecture}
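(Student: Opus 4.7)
The plan is to generalize the triple-and-matching strategy used in the proof of Theorem~\ref{thm:2outer}. Assume $G$ is a connected disk triangulation with layers $L_1, L_2, L_3$ (w.l.o.g., by observations analogous to those in Section~\ref{sec:2-to-1}). The goal is to select a deletion set $D \subseteq L_1 \cup L_2$ of size at most $n/3$ such that every remaining vertex of $G \setminus D$ lies on the outer face of the embedding inherited from $G$.

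The first step is to analyze what it takes to expose an $L_3$ vertex $v$. Because $G$ is a disk triangulation, $v$ reaches the outer face of $G \setminus D$ precisely when $D$ contains a ``channel'' of one $L_2$-vertex and one $L_1$-vertex adjacent to a face incident to $v$. Naively, each exposed $L_3$ vertex costs two deletions, which is too expensive. To amortize, I would build two boundary matchings in the spirit of Lemma~\ref{lm:match}: a matching $M_3 \subseteq \partial G[L_3]$ pairing up $L_3$-vertices of between-degree $1$, and a matching $M_2 \subseteq \partial G[L_2]$ exactly as in the proof of Theorem~\ref{thm:2outer}. For each matched pair $\{u, u'\} \in M_3$, I would pick a common $L_2$-neighbor $w$ (whose existence follows from an analogue of Observation~\ref{obs:face-uvw} applied to $G[L_2 \cup L_3]$); deleting $w$ then exposes both $u$ and $u'$ simultaneously. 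Unmatched $L_3$-vertices already have between-degree $\geq 2$ and thus two candidate $L_2$-exits.

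Next I would partition the vertices into disjoint groups of size $3$, $6$, or a small constant, each consisting of a ``tower'' spanning all three layers. A prototypical $6$-tuple contains an $L_3$-pair, a shared $L_2$-exit, an $L_2$-pair from $M_2$ covering that exit, and an $L_1$-cap; a prototypical triple handles an unmatched $L_3$ or $L_2$ vertex with two layers' worth of exits. In each group a deletion budget of at most one third of its size should suffice. The selection procedure would then mirror the three-phase greedy algorithm of Theorem~\ref{thm:2outer}: first delete vertices that appear in multiple groups to cash in shared exposure, then handle remaining groups individually. The $M_2$ matching guarantees that each forced $L_1$-deletion exposes two $L_2$-vertices, while $M_3$ plays the same role one layer deeper.

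The main obstacle will be the geometric alignment: the $L_1$-deletion and the $L_2$-deletion of a tower must form a connected channel in the plane, not just be ``above'' each other in an abstract sense. Facial blocks of $G[L_1]$ now enclose facial-block substructures of $G[L_2]$, producing a nested block-cut-tree analysis reminiscent of Section~\ref{sec:2-2} but at two levels; the ``separating vertex'' case distinctions will likely have to be replayed on the inner block-cut tree of each facial block. A secondary difficulty is that an $L_2$-vertex may be the only viable exit for several $L_3$-vertices lying in distinct towers, forcing it to discharge its exposure cost across multiple groups; resolving this will probably require either a careful greedy-ordering argument on the dual tree of $G[L_2]$ inside each $L_1$-facial block, or a fractional accounting argument that distributes deletion credit across overlapping towers while maintaining the $n/3$ global budget.
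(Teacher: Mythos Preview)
The statement you are attempting to prove is Conjecture~\ref{conj:3-outer} in the paper, and the paper does \emph{not} prove it. It is listed under ``Future directions'' as an open problem; the only accompanying discussion is a remark (with Figure~\ref{fig:3-outer-ex}) that, unlike in the 2-outerplanar case, deletions from the innermost layer may be unavoidable. There is therefore no proof in the paper to compare your attempt against.

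As a research plan your outline is reasonable in spirit, but it is not a proof, and you yourself flag the two places where the argument currently fails. First, the ``geometric alignment'' issue is real and not addressed: in the 2-outerplanar proof, deleting a single $L_1$-vertex exposes its $L_2$-neighbors because the outer face is already there; for an $L_3$-vertex you need a contiguous channel through both $L_2$ and $L_1$, and nothing in your matching construction forces the chosen $L_2$-exit and $L_1$-cap to be adjacent or even in the same facial block. Second, the amortization over ``towers'' is asserted but not established: the claim that each group can be handled with a deletion budget of one third of its size is exactly the crux of the conjecture, and the overlap problem you mention (an $L_2$-vertex serving as the only exit for several $L_3$-vertices in distinct towers) is not resolved by either of the two vague fixes you propose. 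Moreover, your plan deletes only from $L_1 \cup L_2$, whereas the paper's own example shows that deletions from $L_3$ can be necessary; this already indicates the tower scheme as stated cannot succeed in general. In short, what you have is a plausible starting heuristic, not a proof, and the paper offers no proof to guide you because the statement remains open.
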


If this conjecture is true, it would, by Hosono's result~\cite{Hosono}, imply that the largest induced forest of $3$-outerplanar graphs on $n$ vertices has size at least $\frac{4n}{9}$, that is an improvement over Borodin's result. It also improves the result of Angelini, Evans, Frati, and Gudmundsson for 3-outerplanar graphs. We note that in the proof of Theorem~\ref{thm:2outer}, we only need to delete vertices in $L_1$, and leave $L_2$ untouched, to get a large induced outerplane graph of 2-outerplanar graphs. For $3$-outerplanar graph, one may need to delete vertices in $L_3$ as shown by Figure~\ref{fig:3-outer-ex}.
 
\begin{figure}
\centering
\includegraphics[height=1.5in]{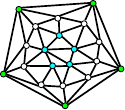}
\caption{A 3-outerplanar graph with $20$ vertices. Filled vertices are in odd layers and hollow vertices are in even layers. To obtain an induced outerplane graph of size at least $14$ ($\lceil 2n/3 \rceil$) vertices, one needs to delete at least one vertex in the innermost layer}.
\label{fig:3-outer-ex}
\end{figure}

We also believe that following conjecture, which is also mentioned in in~\cite{AEFG16}, is true:

\begin{conjecture}\label{conj:planar-outer}
A planar graph on $n$ vertices contains an induced outerplane graph of size at least $\frac{2n}{3}$.
\end{conjecture}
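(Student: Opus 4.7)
The plan is to approach Conjecture~\ref{conj:planar-outer} by strong induction on the number of vertices, extending the matching-plus-triples framework used in the proof of Theorem~\ref{thm:2outer}. As in Section~\ref{sec:2-2}, I would first reduce to the case where $G$ is a connected disk triangulation and every vertex has degree bounded below by a constant, because low-degree vertices and cut vertices admit immediate reductions to smaller instances. Equivalently, the task becomes: find $S\subseteq V(G)$ with $|S|\le n/3$ such that every vertex of $G\setminus S$ lies on the outer face of the inherited embedding.

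The core technical step would be to generalize the triples from the proof of Theorem~\ref{thm:2outer} to arbitrary depth. I would root the dual tree of finite faces at $f_\infty$ and, for each non-boundary vertex $v$, identify a short \emph{exposure witness}---a single vertex whose deletion opens a direct view of $v$ onto a slightly shallower face. These witnesses yield a hypergraph whose vertices are the non-boundary vertices of $G$ and whose hyperedges describe which kept vertices are paid for by each potential deletion. Following the strategy of Lemma~\ref{lm:match}, I would then construct a matching-like system in this hypergraph ensuring that every deleted vertex is charged against at least two kept vertices, yielding the $\tfrac{2n}{3}$ bound.

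The main obstacle I anticipate is that a vertex in layer $L_j$ is blocked from $f_\infty$ by at least $j-1$ intervening vertices, and witnesses must be consistent across layers: the witness for $v\in L_j$ itself needs a witness in $L_{j-1}$, and so on. A naive layer-by-layer recursion loses a factor of $\tfrac23$ per layer and gives a bound decaying geometrically in the outerplanarity depth. The tight example in Figure~\ref{fig:3-outer-ex} shows that one cannot restrict deletions to a single prescribed layer, so the inductive hypothesis must be flexible about \emph{where} the deletion set lives. Breaking the compounding will likely require a global discharging argument on the dual block-cut tree of $G$, using Euler's formula to bound the total deficit directly at $n/3$; I would first test the scheme on Conjecture~\ref{conj:3-outer}, the case $k=3$, which Figure~\ref{fig:3-outer-ex} already suggests is subtle, before attempting the full planar statement.
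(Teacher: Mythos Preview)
The statement you are addressing is Conjecture~\ref{conj:planar-outer}, and the paper does not prove it: it is explicitly posed as an open problem in the final section, alongside Conjecture~\ref{conj:3-outer}. There is therefore no proof in the paper to compare your proposal against.

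Your write-up is candid that it is a plan rather than a proof, and you correctly identify the central obstruction: a vertex in layer $L_j$ is separated from $f_\infty$ by $j-1$ layers, so the single-deletion exposure mechanism from Theorem~\ref{thm:2outer} does not extend directly, and a naive layer-by-layer recursion compounds the $\tfrac23$ factor. This is exactly why the paper stops at $k=2$ and states $k=3$ as a separate conjecture. Your suggestion of a global discharging argument on a dual tree is a plausible line of attack, but nothing in the proposal indicates how to carry it out, and the paper offers no hint either. In short, the proposal is a reasonable outline of where the difficulty lies, but it is not a proof, and the paper contains none for you to match.
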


If this conjecture is true, it would imply an improvement of Borodin's result and Angelini, Evans, Frati, and Gudmundsson' result for general planar graphs. 

\paragraph{Acknowledgments:} The authors thank conversations with Bigong Zheng and Kai Lei. We thank anonymous reviewers for comments that help improving the presentation of this paper. 

 \bibliographystyle{plain}
\bibliography{bib}

\end{document}